





\documentclass[onecolumn]{autart}    


\usepackage{graphicx}           
\usepackage{dcolumn}            
\usepackage{bm}                 

\usepackage{graphics}           
\usepackage{epsfig}             
\usepackage{times}              
\usepackage[fleqn]{amsmath}
\usepackage{mathrsfs}
\usepackage{amssymb}
\usepackage{extarrows}

\usepackage{ntheorem} 

\usepackage{amsfonts}
\usepackage{fancyhdr}
\usepackage{color}
\usepackage{subfigure}
\usepackage{enumerate}
\usepackage{url}

\allowdisplaybreaks[4] 
\newtheorem{theorem}{Theorem} 

\newtheorem{proposition}{Proposition}
\newtheorem{definition}{Definition}

\newtheorem*{proof}{Proof}

\newcommand{\diff}[1]{\text{d}{#1}}

\hyphenation{op-tical net-works semi-conduc-tor}

\begin{document}

\begin{frontmatter}

\title{A PDE--based Power Tracking Control of Heterogeneous TCL Populations} 

\author{Jun Zheng$^{1}$}\ead{zhengjun@swjtu.edu.cn},
\author{Guchuan~Zhu$^{2}$}\ead{guchuan.zhu@polymtl.ca},
\author{Meng~Li$^{3}$}\ead{lmbuaa@gmail.com}

\address{$^{1}${School of Mathematics, Southwest Jiaotong University,
        Chengdu, Sichuan, China 611756}\\
        $^{2}$Department of Electrical Engineering, Polytechnique Montr\'{e}al,  P.~O. Box 6079, Station Centre-Ville, Montreal, QC, Canada H3T 1J4 \\
        $^{3}$College of Big Data and Internet, Shenzhen Technology University, Shenzhen, Guangdong, China 518118}

\begin{keyword}                            
Partial differential equations; Thermostatically controlled loads; Aggregate power control; Well-posedness; Stability. \\\\
\end{keyword}                              
\begin{abstract}                           
This chapter presents the development and the analysis of a scheme for aggregate power tracking control of heterogeneous populations of thermostatically controlled loads (TCLs) based on partial differential equations (PDEs) control theory and techniques. By employing a thermostat--based deadband control with forced switching in the operation of individual TCLs, the aggregated dynamics of TCL populations are governed by a pair of Fokker--Planck equations coupled via the actions located both on the boundaries and in the domain. The technique of input-output feedback linearization is used for the design of aggregate power tracking control, which results in a nonlinear system in a closed loop. As the considered setting is a problem with time-varying boundaries, well-posedness assessment and stability analysis are carried out to confirm the validity of the developed control scheme. A simulation study is contacted to evaluate the effectiveness and the performance of the proposed approach.
\end{abstract}

\end{frontmatter}
%
\section{Introduction}\label{Sec: Introduction}
Control of large populations of thermostatically controlled loads (TCLs), such as refrigerators, air conditioners, space heaters, hot water tanks, etc., has drawn a considerable attention in the recent literature, see, just to cite a few, \cite{Angeli:2012,Callaway:2009,laparra2019,Liu:2016,Liu2016MPC,Mahdavi2017,tindemans2015decentralized,Totu:2017,Zhang2013Aggregated}. The development in this field is mainly motivated by the fact that most of the TCLs exhibit flexibilities in power demand for their operation and elasticities in terms of performance restrictions. Therefore, a large scale TCL population can be managed as a Demand Response (DR) resource to provide such features as power peak shaving and valley filling, as well as absorbing fluctuations of renewable energies and enabling dynamic pricing schemes in the context of the Smart Grid \cite{deng2015survey,Palensky2011}.

Most of the TCL population control techniques developed in recent years can be considered as extensions of the traditional method of direct load control (DLC) by exploiting the bidirectional communication capability enabled by the smart grid paradigm. This is also one of the basic assumptions required in the present work. Moreover, thermostatic--based deadband control is one of the most used schemes in the operation of TCLs, which is convenient for supporting two basic types of switching control, namely fast commutation for power control and slow energy consumption regulation. A fast commutation between ON and OFF states can achieve instantaneous power consumption controls, which can provide, for example, auxiliary services, such as frequency control and load following \cite{Liu:2016,Liu2016MPC,Mahdavi2017,Zhang2013Aggregated}. The long-term energy consumption can be regulated through set-point control, deadband width variation, switching duty cycle adjustment, etc., so that a large population of TCLs can be managed to provide the DR capability \cite{Angeli:2012,Bashash2013,Callaway:2009,Soudjani2015,Totu:2017}.

The focus of this work is put on aggregate power control of heterogeneous TCL populations based on the dynamic model of the population, which is in general composed of the microscopic dynamics of individual TCLs and the macroscopic aggregate model of the population. At the level of individual TCLs, the equivalent thermal parameter (ETP) model is the most adopted one for describing the temperature dynamics of TCLs (see, e.g., \cite{Koch2011Modeling,Liu:2016,Radaideh2019,Zhang2013Aggregated}). At the aggregate level, a pioneer work on the modeling of the aggregated dynamics of TCL populations can be traced back to the one reported in \cite{Malhame:1985}, where a continuum model described by partial differential equations (PDEs) was introduced for the first time. Specifically, it is shown that under the assumption of {homogeneous} TCL populations with all the TCLs modeled by thermostat-controlled scalar stochastic differential equations, the dynamics of a population can be expressed by two coupled Fokker--Planck equations describing the evolution of the distribution of the TCLs in ON and OFF states, respectively. The PDE--based aggregate model has drawn much attention in the recent literature for different DR applications \cite{Angeli:2012,Bashash2013,Callaway:2009,Ghaffari:2015,laparra2019,moura2013modeling,tindemans2015decentralized,Totu:2017,ZLZL:2019} and has led to mathematically rigorous results obtained by leveraging the rich theory from PDEs. This continuum perspective has also enabled more abstract formulations that are independent of the particular solutions. A more generic stochastic hybrid system model applicable to a wider class of responsive loads is developed in~\cite{Zhao:2018}. It should be noted that the diffusive term in the Fokker--Planck equations can capture the heterogeneity of the population at a certain level \cite{moura2013modeling}. Whereas, a heterogeneous population with high diversity can be divided into a finite number of groups, each of which represents a population with limited variation in its heterogeneity~\cite{Ghaffari:2015}.

Another popular and widely adopted model for describing the aggregated dynamics of TCL populations is based on the Markov chains or state bins representation. The original state-bin model proposed in~\cite{Koch2011Modeling} is described by a finite-dimensional linear time-invariant (LTI) system, which has been further extended to adapt to TCLs of more generic dynamics, e.g., higher order systems, with different control mechanisms, and to cope with parameter variations (e.g., time-varying ambient temperature) \cite{Hao2015Aggregated,Liu:2016,Liu2016MPC,Mahdavi2017,Radaideh2019,Soudjani2015,Zhang2013Aggregated}.
It is interesting to note that, as pointed out in~\cite{Zhao:2018}, the state-bin model can be seen as the discretization of a PDE--based model over a range of temperature. However, as most of the PDE aggregate models, such as the Fokker--Planck equation, are nonlinear, or semi-linear with a more accurate terminology in the PDE theory, and time-varying, a PDE model has also to be linearized around an operational point, e.g., the set-point corresponding to a reference temperature, with fixed parameters in order to obtain {an LTI system}. From this point of view, the PDE--based continuum model is indeed a more generic representation of the aggregated dynamics of TCL populations, which can incorporate nonlinearity, time-varying parameters, as well as higher-order ETP models \cite{Zhao:2018}, within a unified formulation.

Indeed, discretizing, and eventually linearizing, the PDE model to obtain a lumped finite-dimensional linear system, also referred as \emph{early-lumping}, is an approach widely adopted in the control of TCL populations \cite{Angeli:2012,Bashash2013,Callaway:2009,Ghaffari:2015,Totu:2017}. With this method, the complexity in control design and implementation, as well as the expected performance, should be similar to that carried out with state-bin--based models. Another approach, which allows better taking advantage of PDE--based models, is to perform the control design directly with the original PDE systems. This approach, referred as \emph{late-lumping}, can preserve the basic property of the original system, in particular the closed-loop stability, and the performance when the designed control is applied \cite{Balas:1978}. Another attractive feature of this approach is that the control implementation is computationally tractable and does not suffer from the diminution of discretization step-size, which is often needed for performance improvement. The application of the late-lumping method to the control of TCL populations has been demonstrated recently in \cite{Ghanavati:2018,ZLZL:2019}.

In the control design of this work, we adopt the approach of input-output linearization for distributed parameter systems \cite{Christofides1996,Christofides2001,Maidi:2014,ZLZL:2019}, which can avoid discretizing and locally linearizing the model by dealing directly with the original nonlinear PDEs, i.e., the Fokker--Planck equations, while coping with time-varying parameters, such as set-point and ambient temperature in a straightforward manner. Specifically, we choose first the aggregate power of the population as the output of the system. We then design a PDE control that renders the input-output dynamics to be a finite dimensional LTI system (see, e.g., \cite{Christofides1996,Christofides2001,Maidi:2014}). Note that this approach is inherited from the well-established technique of input-output linearization for finite dimensional nonlinear system control \cite{Isidori:1995,Khalil:2002,Krstic:1995,Levine:2009}. The developed control scheme is validated through the application to a typical scenario considered in many work reported in the recent literature (e.g., \cite{Angeli:2012,Bashash2013,Callaway:2009,Ghaffari:2015,Ghanavati:2018,Hao2015Aggregated,Koch2011Modeling,laparra2019,Liu:2016,Liu2016MPC,Mahdavi2017,moura2013modeling,Radaideh2019,Soudjani2015,tindemans2015decentralized,Totu:2017,Zhang2013Aggregated,ZLZL:2019}), namely aggregate power control of a large population of heating/cooling appliances that may spread over a wide area.

With the thermostat--based deadband control considered in the present work, switchings may happen both in the domain and on the boundaries. Furthermore, the boundaries are time-varying, which represents a more generic setting. It should be noted that in addition to the appearance of nonlinear nonlocal terms in the closed-loop system, the nature of time-varying boundaries will make control design, well-posedness assessment, and stability analysis more challenging. Nevertheless, we found that this difficulty could be overcome by transforming the original system into a one with fixed boundaries and hence, the techniques developed in \cite{ZLZL:2019} can be applied in well-posedness assessment and stability analysis of the problem considered in this work.
Finally, it is interesting to note that the considered setting covers the case where a number of TCLs may move beyond the current dead-band boundaries
when set-points are changed rapidly and hence, they are forced to switch in an unpredictable manner. In this chapter, we will present a rigorous well-posedness and stability analysis, which can provide a theoretical tool for solving this open problem, which cannot be handled by the existing methods.
Dealing with a generic setting of TCL populations commonly adopted in theoretical researches and practical applications while carrying out control design and analysis in a rigorous manner constitutes a main contribution of this work to the related literatures.

In the rest of this chapter, Section~\ref{Sec: Modeling} presents the dynamical model of individual TCLs under thermostat--based deadband control and the PDE aggregate model of heterogeneous TCLs populations with moving boundary conditions. Control design is detailed in Section~\ref{Sec: Control design}. Well-posedness analysis and stability assessment of the closed-loop system are presented in Section~\ref{Sec: Well-posedness} and Section~\ref{Sec: Stability Analysis}, respectively. A simulation study for evaluating the performance of the developed control scheme is carried out in Section~\ref{Sec: Simulation Study}, followed by some concluding remarks provided in Section~\ref{Sec: Concluding Remarks}.
Notations on function spaces and details of certain technical development are given in appendices.

\section{Modeling aggregate of TCL populations}\label{Sec: Modeling}
\subsection{Dynamics of TCLs under thermostat--based deadband control with forced switching}\label{Sec: Individual Dynamics of TCLs}
We consider a large population of TCLs operated in ON/OFF mode. Denote by $x_i$ the temperature of the $i$-th load in a population of size $N$ whose dynamics are described by (see, e.g., \cite{Bashash2013,Callaway:2009,Koch2011Modeling})
\begin{equation*}
  \dfrac{\diff x_i}{\diff t} = \dfrac{1}{R_i C_i}\left(x_{ie}-x_i - s_iR_iP_i\right), \ i=1,\ldots,N,
\end{equation*}
where $x_{ie}$ is the ambient temperature, $R_i$ is thermal resistance, $C_i$ is thermal capacitance, $P_i$ is thermal power, and the control signal $s_i(t)$ takes a binary value from $\{0,1\}$, representing OFF and ON states, respectively. Note that considering cooling systems is only for the sake of notational simplicity.

As shown in Fig.~\ref{Fig: random sw}, there are two types of switchings in a generic setting of thermostat--based control. In the regular operation regime, switchings will occur when the temperature reaches the boundaries of the deadband $(\underline{x}, \overline{x})$, where $\underline{x}$ and $\overline{x}$ are, respectively, the lower and upper temperature bounds. Forced switchings can be applied inside the deadband for fast power control, as mentioned earlier. Forced switchings can be randomly generated in order to, e.g., avoid the possible power demand oscillations due to synchronization \cite{Callaway:2009,laparra2019,Totu:2017,ZLZL:2019}. Spontaneous interrupts of the operational state of TCLs from the participants of a DR program for any particular motivations can also be classed as forced switchings.

\begin{figure}[htpb]\
  \centering
  \includegraphics[scale=0.225]{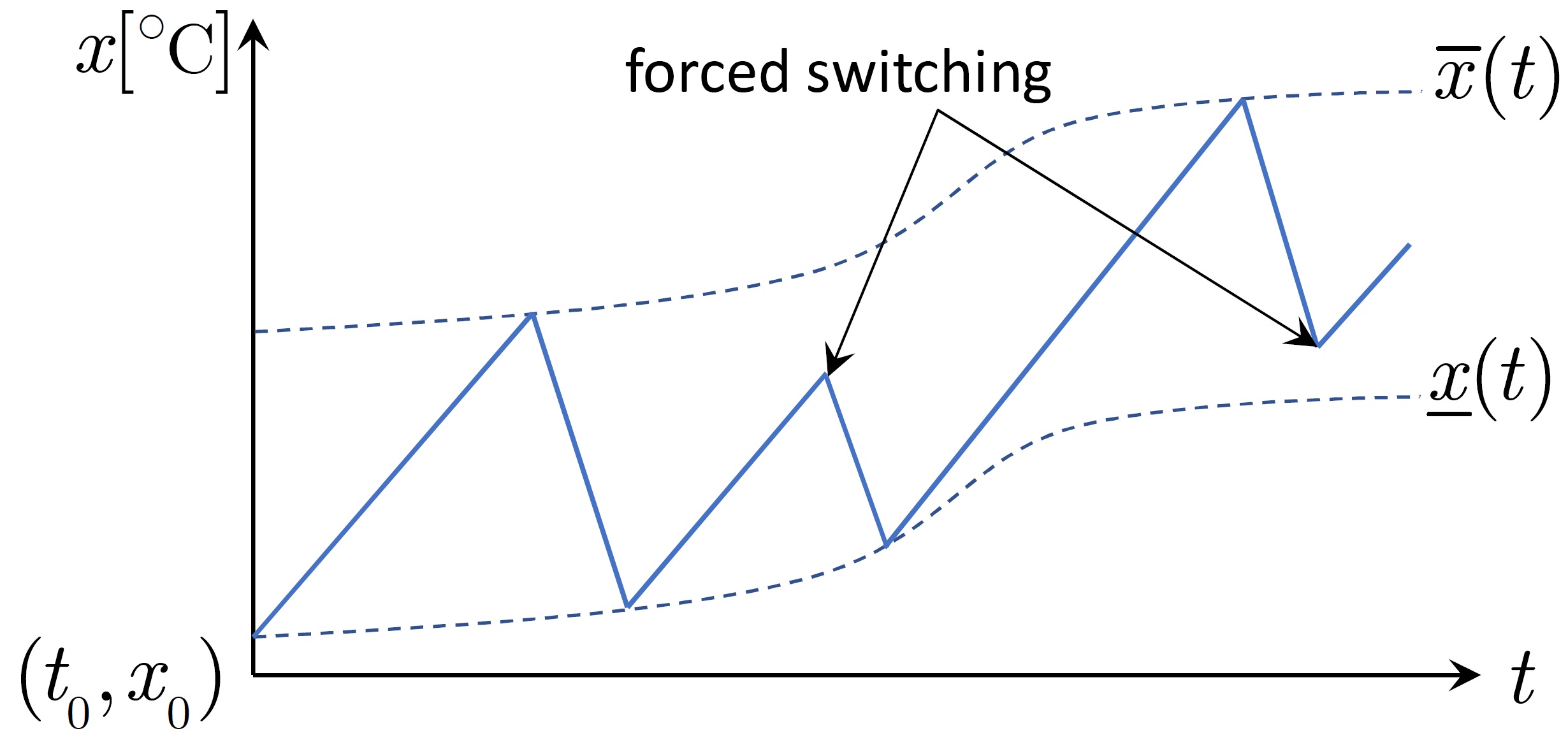}
  \caption{Temperature profile starting from a reference point $(t_0,x_0)$ for a TCL under thermostat--based deadband control with time-varying boundaries and forced switching.
           }\label{Fig: random sw}
\end{figure}

The {forced} switching signal for the $i$-th TCL, denoted by $r_i(t)$, takes also a binary value from $\{0,1\}$, with 1 representing the occurrence of switching and 0 otherwise. The control signal of the $i$-th TCL integrating the two types of switching actions can then be expressed as
\begin{equation*}
  s_i(t) =
  \begin{cases}
    1, & \hbox{if } x \geq \overline{x}; \\
    0, & \hbox{if } x \leq \underline{x};\\
    (s_i(t^-)\wedge r_i(t)) + (s_i(t^-)\vee r_i(t)), & \hbox{otherwise};
  \end{cases}
\end{equation*}
where ``$\wedge$'' and ``$\vee$'' represent the Boolean operations \textsf{AND} and \textsf{OR}, respectively, and ``$+$'' is the one-bit binary addition with overflow.

\subsection{PDE aggregate model}\label{Sec: PDE Model}
Throughout the chapter, let
$\mathbb{R}$, $\mathbb{R}_{\geq 0}$, $\mathbb{R}_{+}$ denote the set of all real numbers, nonnegative real numbers, and positive real numbers, respectively. For $T\in \mathbb{R}_{+}$, let $Q_T=(0,1)\times(0,T)$, $\overline{Q}_T=[0,1]\times[0,T]$, ${Q}_\infty=(0,1)\times\mathbb{R}_{+}$ and $\overline{Q}_\infty=[0,1]\times\mathbb{R}_{\geq 0}$. For $t\in[0,T]$, let ${\Omega}_T=(\underline{x}(t),\overline{x}(t))\times (0,T)$ and $\overline{\Omega}_T=[\underline{x}(t),\overline{x}(t)]\times [0,T]$. For $t\in \mathbb{R}_{+}$, let ${\Omega}_\infty=(\underline{x}(t),\overline{x}(t))\times \mathbb{R}_{+}$ and $\overline{\Omega}_\infty=[\underline{x}(t),\overline{x}(t)]\times \mathbb{R}_{\geq 0}$. {For a nonnegative integer $m$}, we denote by {$w_{\underbrace{s\cdots s}_{m \text{~times}}}$ the $m$th-order derivative} of a function $w$ w.r.t. its argument $s$.

Let {$\alpha_j = \frac{1}{R C}\left(x_{e}-x - s_jRP\right), \ j = 0, 1$,} where $R$, $C$, and $P$ are, respectively, the representative thermal resistance, thermal capacitance, and thermal power of the population, and the control signals $s_0(t) = 0$ and $s_1(t) = 1$ represent OFF and ON states. We recall that $x_{e}$ is the time-varying ambient temperature.

As mentioned earlier, power consumption control of a TCL population under deadband control can be achieved by moving the lower and upper temperature boundaries \cite{Bashash2013,Callaway:2009}. The width of the deadband is denoted by $\Delta x=\overline{x}- \underline{x}$. Fixing $\Delta x$, the control input can be chosen as $u(t)=\dot{\overline{x}}=\dot{\underline{x}}$, where $\dot{x}:=\frac{\diff x}{\diff t}$. Denote by $\delta_{0 \rightarrow 1}(x,t)$ (respectively $\delta_{1 \rightarrow 0}(x,t)$) the {net flow} due to the switch of loads at $(x,t)$ from OFF-state to ON-state (respectively from ON-state to OFF-state) {inside of the temperature bounds}. Under the assumption of mass-conservative, that is the size of the population is constant, there must be $\delta_{0 \rightarrow 1}(x,t) = -\delta_{1 \rightarrow 0}(x,t) := \delta(x,t)$. Let $w(x,t)$ and $v(x,t)$ be the distribution of loads [number of loads/$^{\circ}$C] at temperature $x$ and time $t$, over the ON and OFF states, respectively. Then, the dynamics of distribution evolution of a heterogeneous TCL population can be expressed by the following forced Fokker--Planck equations \cite{Bashash2013,Callaway:2009}:
\begin{subequations}\label{eq: foced FPE}
\begin{align}
  w_t(x,t) = \left( \beta w_x(x,t)- \left(\alpha_{1}(x)-u(t)\right)w(x,t)\right)_x+\delta(x,t),~&(x,t)\in \Omega_{\infty}, \label{eq: foced FPE ON}\\
  v_t(x,t) = \left( \beta v_x(x,t)- \left(\alpha_{0}(x)-u(t)\right)v(x,t)\right)_x-\delta(x,t),~&(x,t)\in \Omega_{\infty},\label{eq: foced FPE OFF}
\end{align}
\end{subequations}
with $\beta$ denoting the diffusion coefficient, satisfying the following boundary conditions:
\begin{subequations} \label{eq: BC 2}
\begin{align}
  {\mathscr{B}_1[w](\overline{x},t):=}\beta w_x(\overline{x},t) - \left(\alpha_{1}(\overline{x})
  {- u(t) -\dot{\overline{x}}} \right)w(\overline{x},t) = {\overline{\sigma} (t)},~&t\in\mathbb{R}_{\geq 0}, \\
  {\mathscr{B}_1[w](\underline{x},t):=}\beta w_x(\underline{x},t) - \left(\alpha_{1}(\underline{x})
  {- u(t) -\dot{\underline{x}}}\right)w(\underline{x},t) = {\underline{\sigma}(t)},~&t\in\mathbb{R}_{\geq 0}, \\
  {\mathscr{B}_2[v](\overline{x},t):=}\beta v_x(\overline{x},t) - \left(\alpha_{0}(\overline{x})
  {- u(t) -\dot{\overline{x}}}\right)v(\overline{x},t) = {-\overline{\sigma} (t)},~&t\in\mathbb{R}_{\geq 0}, \\
  {\mathscr{B}_2[v](\underline{x},t):=}\beta v_x(\underline{x},t) - \left(\alpha_{0}(\underline{x})
  {- u(t) -\dot{\underline{x}}}\right)v(\underline{x},t) = {-\underline{\sigma}(t)},~&t\in\mathbb{R}_{\geq 0},
 \end{align}
\end{subequations}
where $\overline{\sigma}(t)$ and $\underline{\sigma}(t)$ represent the net flux due to the switch of loads on the boundaries $\overline{x}$ and $\underline{x}$, respectively, including the flux due to the switch of loads beyond the current dead-band boundaries when set-points are changed rapidly.

The initial value conditions are given by
\begin{align}
w(x,0) = w^0(x), \ v(x,0) = v^0(x),~x\in (\underline{x}(0),\overline{x}(0)).\label{initialwv}
\end{align}
Clearly, the boundary conditions given in \eqref{eq: BC 2} describe the coupling of the $w$- and $v$-dynamics on the boundaries. Moreover, as shown in Proposition~\ref{prop. 8}, the property of masse conservation can be assured, i.e., for any $t\in\mathbb{R}_{\geq 0}$, it holds
\begin{align}\label{agg}
  N_{\text{agg}}(t) &= \int_{\underline{x}(t)}^{\overline{x}(t)} \left(w(x,t)+v(x,t)\right)\diff x
                    = \int_{\underline{x}(0)}^{\overline{x}(0)} \left(w^0(x)+v^0(x)\right)\diff x,
\end{align}
which represents the size of the population.

\subsection{Equivalent PDE model}
Note that the system defined in \eqref{eq: foced FPE} and \eqref{eq: BC 2} is a problem with moving boundaries for which it will be difficult to perform control system design and analysis. For this reason, we transform this system into an equivalent model with fixed boundaries. Specifically, let $z=\frac{x-\underline{x}}{\Delta x}$, then $\underline{x}(t)\leq x\leq\overline{x}(t)$ makes $0\leq z\leq 1$.
Thus, $w(x,t)=w(z\Delta x+\underline{x},t):=\widetilde{w}(z,t)$, $\delta(x,t)=\delta(z\Delta x+\underline{x},t):=\widetilde{\delta}(z,t)$
 and $\alpha_j(x,t)=\alpha_j(z\Delta x+\underline{x},t):=\widetilde{\alpha}_j(z,t),j=0,1$, which is expressed by
\begin{equation*}
  \widetilde{\alpha}_j(z,t) = \dfrac{1}{R C}\left((z_{e}-z)\Delta x - s_jRP\right), \ j = 0, 1,
\end{equation*}
with $z_e = \frac{x_e-\underline{x}}{\Delta x}$. Moreover,
\begin{align*}
  w_x=\frac{1}{\Delta x}\widetilde{w}_z,w_{xx}=\frac{1}{(\Delta x)^2}\widetilde{w}_{zz},{w_t=\widetilde{w}_t-\widetilde{w}_z\times \frac{\dot{\underline{x}}}{\Delta x}}.
  \end{align*}
Then the system given in \eqref{eq: foced FPE} becomes
\begin{align*}
  \widetilde{w}_t = \frac{1}{\Delta x}\left( \frac{\beta}{\Delta x}\widetilde{w}_z- \left(\widetilde{\alpha}_{1}-u-\dot{\underline{x}}\right)\widetilde{w}\right)_z
                                         +\widetilde{\delta},~&(z,t)\in Q_\infty,\\
  \widetilde{v}_t = \frac{1}{\Delta x}\left(\frac{\beta}{\Delta x}  \widetilde{v}_z- \left(\widetilde{\alpha}_{0}-u-\dot{\underline{x}}\right)\widetilde{v}\right)_z
                                         -\widetilde{\delta},~&(z,t)\in Q_\infty.
\end{align*}
The boundary conditions in \eqref{eq: BC 2} become
\begin{align*}
\frac{\beta}{\Delta x} \widetilde{w}_z(1,t) - \left(\widetilde{\alpha}_{1}(1,t)-u(t)- {\dot{\overline{x}}(t)}\right)\widetilde{w}(1,t)
  =  {\overline{\sigma}(t)},~&t\in\mathbb{R}_{\geq 0}, \\
  \frac{\beta}{\Delta x} \widetilde{w}_z(0,t) - \left(\widetilde{\alpha}_{1}(0,t)-u(t)- {\dot{\underline{x}}(t)}\right)\widetilde{w}(0,t)
  =  {\underline{\sigma}(t)},~&t\in\mathbb{R}_{\geq 0},\\
 \frac{\beta}{\Delta x} \widetilde{v}_z(1,t) - \left(\widetilde{\alpha}_{0}(1,t)-u(t)-\dot{\overline{x}}(t)\right)\widetilde{v}(1,t)
    = {-\overline{\sigma}(t)}, ~&t\in\mathbb{R}_{\geq 0}, \\
     \frac{\beta}{\Delta x} \widetilde{v}_z(0,t) - \left(\widetilde{\alpha}_{0}(0,t)-u(t)-\dot{\underline{x}}(t)\right)\widetilde{v}(0,t)
    = {-\underline{\sigma}(t)}, ~&t\in\mathbb{R}_{\geq 0}.
\end{align*}

Let $\widehat{\beta}= \frac{\beta}{(\Delta x)^2}$, $\widehat{u}= \frac{1}{\Delta x}u$, $\widehat{\alpha}_j= \frac{1}{\Delta x}\widetilde{\alpha}_j$, {$\widehat{\overline{\sigma}}= \frac{1}{\Delta x} \overline{\sigma},\widehat{\underline{\sigma}}= \frac{1}{\Delta x} \underline{\sigma}$} and $w^0(x)=w^0(z\Delta x+\underline{x}):=\widetilde{w}^0(z),v^0(x)=v^0(z\Delta x+\underline{x}):=\widetilde{v}^0(z)$. Noting that {$u(t) = \dot{\underline{x}}(t) = \dot{\overline{x}}(t)$}, we obtain the following system:
\begin{subequations}\label{eq: foced FPE'}
\begin{align}
  \widetilde{w}_t = \left(\widehat{\beta}\widetilde{w}_z- \left(\widehat{\alpha}_{1}-2{\widehat{u}}\right)\widetilde{w}\right)_z
                                         +\widetilde{\delta}, \ &(z,t)\in Q_\infty, \label{eq: foced FPE ON'}\\
  \widetilde{v}_t = \left(\widehat{\beta}\widetilde{v}_z- \left(\widehat{\alpha}_{0}-2{\widehat{u}}\right)\widetilde{v}\right)_z
                                         -\widetilde{\delta}, \ &(z,t)\in Q_\infty, \label{eq: foced FPE OFF'}\\
  \widehat{\beta} \widetilde{w}_z(1,t) - \left(\widehat{\alpha}_{1}(1,t)- 2\widehat{u}(t)\right)\widetilde{w}(1,t)
        =   {\widehat{\overline{\sigma}}(t)}, \ &t\in\mathbb{R}_{\geq 0}, \label{B1}\\
  \widehat{\beta} \widetilde{w}_z(0,t) - \left(\widehat{\alpha}_{1}(0,t)- 2\widehat{u}(t)\right)\widetilde{w}(0,t)
        = {\widehat{\underline{\sigma}}(t)}, \ &t\in\mathbb{R}_{\geq 0}, \label{B2}\\
  \widehat{\beta} \widetilde{v}_z(1,t) - \left(\widehat{\alpha}_{0}(1,t)- 2\widehat{u}(t)\right)\widetilde{v}(1,t)
       =  {-\widehat{\overline{\sigma}}(t)}, \ &t\in\mathbb{R}_{\geq 0},\label{B3}\\
  \widehat{\beta} \widetilde{v}_z(0,t) - \left(\widehat{\alpha}_{0}(0,t)- 2\widehat{u}(t)\right)\widetilde{v}(0,t)
       =  {-\widehat{\underline{\sigma}}(t)}, \ &t\in\mathbb{R}_{\geq 0},\label{B4}\\
  \widetilde{w}(z,0)= \widetilde{w}^0(z),\widetilde{v}(z,0)= \widetilde{v}^0(z), \ &z\in (0,1).\label{I1}
\end{align}
\end{subequations}
It should be noted that transforming the original system~\eqref{eq: foced FPE} into \eqref{eq: foced FPE'} is only for the purpose of simplifying control design and analysis. Whereas, control implementation and operation will be performed in the original frame with moving temperature bounds.

\section{Aggregate power tracking control design}\label{Sec: Control design}
In the considered power load tracking control of the whole TCL population, the weighted total power consumption is chosen as the system output
\begin{align}\label{eq: output}
  y(t) =& \dfrac{P}{\eta}\int_{\underline{x}(t)}^{\overline{x}(t)} \left(a x + b(t)\right)w(x,t)\diff x,
\end{align}
where $\eta$ is the load efficiency, $a$ is a non-zero constant, and {$b(t)$ is a $C^2$-function}. Note that the weighting function $ax+b$ {with $a \neq 0$ introduced in system output defined above is to guarantee that the input-output dynamics of the system are well-defined in terms of characteristic index \cite{Christofides1996,Christofides2001}}, which is a generalization of the concept of relative degree for finite dimensional systems \cite{Isidori:1995,Khalil:2002,Levine:2009}. Moreover, although other forms of weighting function can also be considered, the one given in \eqref{eq: output} is a convenient choice that facilitates control design and well-posedness and stability analysis, and is meaningful for the application considered in this work as illustrated later in Section~\ref{Sec: Simulation Study}. However, as the boundaries in the computation of the output defined in \eqref{eq: output} are time-varying, it is not suitable for control design and analysis. For this reason, we transform the output into a form with fixed boundaries in the normalized coordinates. Specifically, letting $\widehat{a} = a{(\Delta x)}^2$ and $\widehat{b} = \Delta x(a\underline{x}+b)$, the output can be expressed in the normalized coordinates as
\begin{align}\label{eq: output'}
  y(t)  =\dfrac{P}{\eta}\int_{0}^{1} \left(\widehat{a}z+\widehat{b}\right)\widetilde{w}(z,t)\diff z.
\end{align}
Moreover, we have $\dot{\widehat{b}} = \Delta x (a\dot{\underline{x}}+\dot{b}) = \widehat{a}\widehat{u}+\Delta x\dot{b}$.

Let $y_d(t)$ be the desired aggregate power, which is usually a sufficiently smooth function, and denote by $e(t) = y(t) - y_d(t)$ the tracking error. We can derive from \eqref{eq: foced FPE'} and \eqref{eq: output'} the tracking error dynamics
\begin{align*}
  \dfrac{\diff{e}}{\diff{t}}
  =& \dfrac{P}{\eta}\int_{0}^{1}\left(\left(\widehat{a}z + \widehat{b}\right)\widetilde{w}_t+ \dot{\widehat{b}} \widetilde{w}\right)\diff z- \dot{y}_d(t)\\
  =& \dfrac{P}{\eta}\int_{0}^{1}\left(\widehat{a}z+\widehat{b}\right)\left(\widehat{\beta} \widetilde{w}_{z}- (\widehat{\alpha}_{1}
     -2\widehat{u})\widetilde{w} \right)_z\diff z  + \dfrac{P}{\eta}\int_{0}^{1}  {\left({\widehat{a}} \widehat{u}+ \Delta x\dot{b}\right)} \widetilde{w} \diff z
 + \dfrac{P}{\eta}\int_{0}^{1} \left(\widehat{a}z+\widehat{b}\right)\widetilde{\delta} \diff z - \dot{y}_d(t).
\end{align*}
Performing integration by parts on the first term on the right-hand side of the above equation while applying the boundary conditions given in \eqref{eq: foced FPE'}, we get
\begin{align*}
  \dfrac{\diff{e}}{\diff{t}}
     =&-\dfrac{P}{\eta}\int_{0}^{1}\widehat{a}\left(\widehat{\beta} \widetilde{w}_{z}- (\widehat{\alpha}_{1} -{\widehat{u}})\widetilde{w} \right)\diff z+ {\dfrac{P}{\eta}\int_{0}^{1} \Delta x\dot{b}\widetilde{w}\diff z}+ \dfrac{P}{\eta}\left[{\left(\widehat{a}+\widehat{b}\right)\widehat{\overline{\sigma}}(t) {-\widehat{b}\widehat{\underline{\sigma}}(t)}}
            + \int_{0}^{1}\left(\widehat{a}z+\widehat{b}\right)\widetilde{\delta}\diff z\right]- \dot{y}_d(t).
\end{align*}
Note that $ \int_{0}^{1} \widetilde{w}_{z}\diff z=\widetilde{w}(1,t)-\widetilde{w}(0,t)$ and let
\begin{align}
 \widehat{u}(t)
      = & -\frac{\widehat{\beta}(\widetilde{w}(1,t)-\widetilde{w}(0,t))-\displaystyle\int_{0}^{1}\left(\widehat{\alpha}_{1}+ { \Delta x\dot{b}}\right)\widetilde{w}
            \diff{z} + \frac{\eta}{\widehat{a}P}\phi(t)}{\displaystyle {\int_{0}^{1}\widetilde{w}\diff z}}, \label{eq: control u}\\
 \Gamma(t)=&  \dfrac{P}{\eta}\left[{\left(\widehat{a}+\widehat{b}\right)\widehat{\overline{\sigma}}(t)-\widehat{b}\widehat{\underline{\sigma}}(t)}                      + \int_{0}^{1}\left(\widehat{a}z+\widehat{b}\right)\widetilde{\delta}\diff z\right], \label{eq: control Gamma}
\end{align}
where $\phi(t)$ is an auxiliary control.
Then the tracking error dynamics become
\begin{equation}\label{eq: error dynamics 1}
  \dfrac{\diff{e}}{\diff{t}}(t) = \phi(t) - \dot{y}_d(t) + \Gamma(t), \ \ e(0) = e_0,
\end{equation}
where $e_0$ is the initial regulation error.

In the original coordinates, the control is expressed as
\begin{align}
   u(t)   =&-\frac{\beta(w(\overline{x},t)-w(\underline{x},t))-\displaystyle\int_{\underline{x}}^{\overline{x}}{\left(\alpha_{1}+ \dot{b}\right)}w
            \diff{x} + \frac{\eta}{aP}\phi(t)}{\displaystyle   {\Delta x }\int_{\underline{x}}^{\overline{x}}w\diff{x}},\label{eq: control u'}\\
   \Gamma(t)=&\dfrac{P}{\eta}\left[{\left(a\overline{x}+b\right)\overline{\sigma}
   (t)-\left(a\underline{x}+b\right)\underline{\sigma}(t)}
                            +\int_{\underline{x}}^{\overline{x}}\left(ax+b\right)\delta \diff x\right].  \label{eq: control Gamma'}
\end{align}

Letting
\begin{equation}\label{eq: stabilizer}
  \phi(t) = \dot{y}_d(t) -k_0e(t),
\end{equation}
where $k_0 > 0$ is the controller gain, we obtain
\begin{equation*}
  \dfrac{\diff{e}}{\diff{t}}(t) = -k_0 e(t) + \Gamma(t), \ \ e(0) = e_0.
\end{equation*}

It can be seen from \eqref{eq: control u'} that as the deadband is not empty, the characteristic index of the input-output dynamics of the system is $1$ if $\int_{\underline{x}}^{\overline{x}}w\diff{x}$, representing the accumulation of the loads in ON-state, is not null. This condition can be easily fulfilled for a large enough TCL population for which it is reasonable that at least one TCL is in the whole population in ON-state all the time. In addition, due to the fact that at any moment only a small portion of TCLs may change their state between ON and OFF and the {boundedness of $b$} can be guaranteed by an appropriate design, $\Gamma(t)$ is uniformly bounded, i.e., $|\Gamma(t)| \leq \Gamma_{\infty}$ for all $t > 0$ with $\Gamma_{\infty}$ a positive constant. Thus, the control given in \eqref{eq: stabilizer} guarantees that the trajectory of the system~\eqref{eq: error dynamics 1} is globally uniformly bounded, as stated later in Theorem~\ref{theorem 12}. Moreover, the amplitude of regulation error $e(t)$ may be reduced by increasing the control gain $k_0$. {In addition, it should be noted that the proposed control scheme is robust with respect to $\delta(x,t)$, $\overline{\sigma}(t)$, and $\underline{\sigma}(t)$, which are treated as disturbances. Finally, as there is no need to compute $\Gamma(t)$,} the TCLs do not need to signal the instantaneous switching operations. This will allow greatly simplifying the implementation and making the control scheme insensitive to timing constraints.

\section{Well-posedness and stability of the closed-loop system}\label{Sec: Well-posedness Stability}
This section is dedicated to addressing two essential properties of the closed-loop system, namely the well-posedness of the setting in terms of the existence and the uniqueness of the solution and the closed-loop stability.
\subsection{Well-posedness assessment}\label{Sec: Well-posedness}
It is well-known that in the study of classical PDEs (e.g., Laplace equation, heat equations, and wave equations), usually some very specific types of boundary conditions, e.g., Dirichlet, Neumann and Robin boundary conditions, are associated with these equations. It is often physically obscure whether a boundary condition is appropriate or not for a given PDE. Therefore, this aspect has to be clarified by a fundamental mathematical insight.

According to Hadamard's principe \cite{Hadamard:1923}, an initial-boundary value problem (IBVP) of PDEs is said to be ``well-posed'' if:
\begin{itemize}
  \item [(i)] it has a solution on a prescribed domain for all suitable boundary data;
 \item [(ii)] the solution is uniquely determined by such data;
 \item [(iii)] the solution is also continuously determined by such data.
\end{itemize}
In PDE theory, the study of well-posedness is mainly focused on the existence and the uniqueness of the solutions. From an application point of view, the importance of the well-posedness is that it assures the validity of a model in the sense that its mathematical formulation complies with the nature of the real-world system. However, unlike finite-dimensional systems described by ordinary differential equations for which the existence of unique solutions of a wide variety of systems can be confirmed under some regular conditions (see, e.g., \cite[Ch.~3]{Khalil:2002}), there do not exist generic results for the well-posedness of PDEs. Therefore, well-posedness assessment for PDEs is usually carried out on a case-by-case basis. This motivates our study presented in this section.

%

We establish first the well-posedness of the closed-loop system composed of \eqref{eq: foced FPE}, \eqref{eq: BC 2}, \eqref{initialwv}, \eqref{eq: output}, \eqref{eq: error dynamics 1}, \eqref{eq: control u'}, \eqref{eq: control Gamma'}, and \eqref{eq: stabilizer}, {or equivalently}, \eqref{eq: foced FPE'}, \eqref{eq: output'}, \eqref{eq: control u}, \eqref{eq: control Gamma}, \eqref{eq: error dynamics 1}, and \eqref{eq: stabilizer}. For this purpose, in the present work, we always impose the following basic assumptions:
\begin{enumerate}
  \item[\textbf{(A1)}] ${\beta},P,\eta,k_0 \in\mathbb{R}_+$, $a\in \mathbb{R}\setminus\{0\}$;
  \item[\textbf{(A2)}] ${\overline{\sigma},\underline{\sigma}\in  L^1_{\text{loc}}(\mathbb{R}_{\geq 0})}$,  $b\in  C^2(\mathbb{R}_{\geq 0})$,
      $y_d\in C^2(\mathbb{R}_{\geq 0})$,
   $\overline{x},\underline{x}\in C^1(\mathbb{R}_{\geq 0})$,  $\Delta x:=\overline{x}-\underline{x}$ is a positive constant;
  \item[\textbf{(A3)}] for any {$T\in \mathbb{R}_+$ and $t\in[0,T]$, $ \delta \in L^1(\Omega_T)$}, ${\alpha}_1,{\alpha}_0 \in C^1([\underline{x}(t),\overline{x}(t)])$, $w^0,v^0\in \mathcal {H}^{2+\theta}([\underline{x}(t), \overline{x}(t)])$ with a constant $\theta\in (0,1)$;
  \item[\textbf{(A4)}] for any $t\in\mathbb{R}_{\geq 0}$,  $w^0,v^0,\delta,\overline{\sigma}$ and $\underline{\sigma}$ satisfy the following conditions:
 \begin{subequations}
\begin{align}
&{\mathscr{B}_1[w^0](\overline{x},0)=\overline{\sigma}(0),
\mathscr{B}_1[w^0](\underline{x},0)= \underline{\sigma}(0),}\label{09021}\\
&{\mathscr{B}_0[v^0](\overline{x},0)=-\overline{\sigma}(0),\mathscr{B}_0[v^0](\underline{x},0)=-\underline{\sigma}(0),}\label{09022}\\
  &\bigg|\int_{\underline{x}(0)}^{\overline{x}(0)}w^0(x)\diff x\bigg|>0, \label{eq: ON on 0}\\
  &\bigg|{\displaystyle\int_{\underline{x}(0)}^{\overline{x}(0)}w^0(x)\diff x +\displaystyle\int_{0}^t\int_{\underline{x}(s)}^{\overline{x}(s)} \delta(x,s)\diff x\diff s+ \int_{0}^t\left(\overline{\sigma} (s)-\underline{\sigma}(s)\right) \diff s}\bigg|\geq \frac{\delta_0}{2},\label{+28}
\end{align}
\end{subequations}
where $\delta_0$ is positive constant.
\end{enumerate}
Note that \eqref{09021} and \eqref{09022} are compatibility conditions for the well-posedness and the conditions given in \eqref{eq: ON on 0} and \eqref{+28} mean that at any time the TCLs are not all in OFF-state. {Moreover, the assumptions on the discontinuities and local integrabilities of  $\delta$, $\overline{\sigma}$, and $\underline{\sigma}$ reflect well the nature of the considered problem.}  {However, since $\delta$, $\overline{\sigma}$, and $\underline{\sigma}$ are discontinuous, the closed-loop system composed of  \eqref{eq: foced FPE}, \eqref{eq: BC 2}, \eqref{initialwv}, \eqref{eq: output}, \eqref{eq: error dynamics 1}, \eqref{eq: control u'}, \eqref{eq: control Gamma'}, and \eqref{eq: stabilizer} (or \eqref{eq: foced FPE'}, \eqref{eq: output'}, \eqref{eq: control u}, \eqref{eq: control Gamma}, \eqref{eq: error dynamics 1}, and \eqref{eq: stabilizer}) does not admit any classical solution that satisfies the equations pointwisely. To address the well-posedness, we consider certain smooth solutions that satisfy the equations in the sense of distribution. It is worth noting that most of the above assumptions are required for assuring the existence of such smooth solutions to the considered problem, which can eventually be relaxed if we consider solutions in much {weaker} senses. }
%
\begin{definition}
Let $D$ be {an open or closed} domain in $\mathbb{R}^i (i=1,2)$. For two locally integrable functions $f$ and $g$ defined on $D$, we say that $f=g$ in the sense of distribution, if $\int_Df(s) \varphi(s)\diff s=\int_Dg(s) \varphi(s)\diff s$ holds for any $\varphi\in C_0^\infty(D)$.
\end{definition}
\begin{definition}
\begin{enumerate}
\item[(i)]
We say that $(w,v)\in C^{2,1}(\overline{\Omega}_\infty)\times C^{2,1}(\overline{\Omega}_\infty)$ is a distributional solution of  the closed-loop system composed of \eqref{eq: foced FPE}, \eqref{eq: BC 2}, \eqref{initialwv}, \eqref{eq: output}, \eqref{eq: error dynamics 1}, \eqref{eq: control u'}, \eqref{eq: control Gamma'}, and \eqref{eq: stabilizer}, if such $(w,v)$ satisfies \eqref{eq: foced FPE}, {\eqref{eq: BC 2}, and \eqref{initialwv} in the sense of distribution}.
\item[(ii)] We say that $(w,v)\in C^{2,1}(\overline{Q}_{\infty})\times C^{2,1}(\overline{Q}_{\infty})$ is a  distributional solution of the closed-loop system composed of \eqref{eq: foced FPE'}, \eqref{eq: output'}, \eqref{eq: control u}, \eqref{eq: control Gamma}, \eqref{eq: error dynamics 1}, and \eqref{eq: stabilizer}, if such  $(\widetilde{w},\widetilde{v})$ satisfies \eqref{eq: foced FPE ON'}, \eqref{eq: foced FPE OFF'}, \eqref{B1}, \eqref{B2}, \eqref{B3}, \eqref{B4}, and \eqref{I1} in the sense of distribution.
    \end{enumerate}
\end{definition}
Essentially, the existence of a  solution to an IBVP can be established via regularity analysis and \textit{a priori} estimates of the solution. The result on the existence of a solution to the considered problem is given in the following theorem.

\begin{theorem}\label{well-posedness'}The closed-loop system composed of \eqref{eq: foced FPE}, \eqref{eq: BC 2}, \eqref{initialwv}, \eqref{eq: output}, \eqref{eq: error dynamics 1}, \eqref{eq: control u'}, \eqref{eq: control Gamma'}, and \eqref{eq: stabilizer} admits a distributional solution $(w,v)\in C^{2,1}(\overline{\Omega}_\infty)\times C^{2,1}(\overline{\Omega}_\infty)$.
\end{theorem}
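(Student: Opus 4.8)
The plan is to carry out the whole argument in the fixed-boundary coordinates, where the two closed-loop formulations are equivalent, and to establish existence of a distributional solution $(\widetilde w,\widetilde v)\in C^{2,1}(\overline Q_\infty)\times C^{2,1}(\overline Q_\infty)$ of \eqref{eq: foced FPE'} under the control \eqref{eq: control u}; inverting the change of variables $z=(x-\underline x)/\Delta x$ then transfers the result to the original frame stated in the theorem. The decisive preliminary observation is that the two scalar quantities on which the control depends most sensitively are in fact determined \emph{a priori} by the data, and in particular that the denominator of \eqref{eq: control u} never degenerates. Integrating \eqref{eq: foced FPE ON'} over $z\in(0,1)$ and inserting the flux boundary conditions \eqref{B1}--\eqref{B2}, the ON-mass $m(t):=\int_0^1\widetilde w(z,t)\,\diff z$ satisfies the closed ODE $\dot m=\widehat{\overline\sigma}(t)-\widehat{\underline\sigma}(t)+\int_0^1\widetilde\delta\,\diff z$, whose solution is precisely the expression bounded below by $\delta_0/2$ in \textbf{(A4)}, cf. \eqref{+28}; hence $m(t)$ is known and stays uniformly away from zero. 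Likewise $\Gamma(t)$ in \eqref{eq: control Gamma} involves only the data, so \eqref{eq: error dynamics 1} together with \eqref{eq: stabilizer} is the scalar linear ODE $\dot e=-k_0e+\Gamma$, explicitly solvable, and $\phi(t)=\dot y_d(t)-k_0e(t)$ is known a priori; since $|\Gamma|\le\Gamma_\infty$ is already established, $e$ and $\phi$ are globally bounded.

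With these facts the closed-loop control may be written as
\[
  \widehat u(t)=-\frac{1}{m(t)}\Big(\widehat\beta\big(\widetilde w(1,t)-\widetilde w(0,t)\big)-\int_0^1\big(\widehat\alpha_1+\Delta x\dot b\big)\widetilde w\,\diff z+\frac{\eta}{\widehat a P}\phi(t)\Big),
\]
where $m(t)\ge\delta_0/(2\Delta x)$ and $\phi$ are known, and the bracket is a continuous (bounded) linear functional of $\widetilde w$ acting through its boundary traces and a first moment. Note that the $w$-subsystem decouples: $\widehat u$ and \eqref{eq: foced FPE ON'} involve $\widetilde w$ alone, not $\widetilde v$. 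Substituting the above $\widehat u$ into \eqref{eq: foced FPE ON'}, \eqref{B1}, \eqref{B2}, \eqref{I1} yields, for each frozen $\widehat u\in C([0,T])$, a \emph{linear} parabolic initial-boundary value problem for $\widetilde w$, the nonlocal feedback residing in the advection coefficient and in the Robin-type boundary data through the traces $\widetilde w(0,\cdot),\widetilde w(1,\cdot)$. I would close this coupling by a fixed-point argument on $C([0,T^\ast])$: given $\widehat u$, linear parabolic theory (Schauder estimates, after mollifying the merely $L^1$ data $\widetilde\delta,\widehat{\overline\sigma},\widehat{\underline\sigma}$) produces a unique $\widetilde w[\widehat u]$ with continuous traces, and the induced map $\widehat u\mapsto\widehat u'$ defined by the displayed formula is a contraction on a short interval, the lower bound $m(t)\ge\delta_0/(2\Delta x)$ controlling the Lipschitz constant. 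Iterating, and using the global bounds on $m$ and $\phi$ together with the maximum principle to preclude blow-up of the traces, extends the solution to all of $\mathbb R_{\ge0}$.

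Once $\widehat u\in C(\mathbb R_{\ge0})$ is obtained, \eqref{eq: foced FPE OFF'}, \eqref{B3}, \eqref{B4}, \eqref{I1} form a genuinely linear parabolic problem for $\widetilde v$ with known coefficients and forcing, solved by the same linear theory. The claimed regularity $\widetilde w,\widetilde v\in C^{2,1}(\overline Q_\infty)$ and the distributional sense of \eqref{eq: foced FPE'} are then recovered by passing to the limit in the mollified problems: uniform Schauder bounds give compactness, while the compatibility conditions \eqref{09021}--\eqref{09022} are exactly what is needed to obtain $C^{2,1}$ regularity up to $t=0$ and up to the spatial corners, after which the limit is checked to satisfy the equations against test functions in the sense of distribution; this limiting passage follows the approach of \cite{ZLZL:2019}. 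I expect the main obstacle to be precisely this coupled fixed-point and regularity step: because the control feeds back through the boundary \emph{traces} of $\widetilde w$, the nonlocal nonlinearity is embedded in the boundary conditions rather than in a benign zeroth-order term, so one must simultaneously propagate the low ($L^1$) regularity of $\delta,\overline\sigma,\underline\sigma$ through the parabolic smoothing, keep the trace map Lipschitz against the frozen control, and still land in $C^{2,1}$ while verifying the distributional identities.
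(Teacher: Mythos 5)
Your preliminary reductions are correct and they genuinely depart from the paper's route: since $\Gamma$ in \eqref{eq: control Gamma} involves only the data, the error $e$ and hence $\phi$ are indeed determined a priori by the scalar ODE, and the mass $m(t)=\int_0^1\widetilde w(z,t)\,\diff z$ is a known function bounded away from zero by \eqref{+28}. The paper uses the same mass identity to control the denominator, but it never exploits the a priori solvability of the $e$-equation; it substitutes $e=y-y_d$ and keeps the output feedback inside the nonlocal functional $G_{\widetilde w}$. Your decoupling of the $\widetilde w$-subsystem and the subsequent linear solve for $\widetilde v$ also match the paper. The genuine gap is your global continuation step. A short-time contraction for the map $\widehat u\mapsto\widehat u'$ is plausible, but to iterate it over all of $\mathbb{R}_{\geq 0}$ you need an a priori bound, on bounded time intervals, for the quantities that determine both the size of $\widehat u$ and the length of the next existence interval, namely the traces $\widetilde w(0,t)$, $\widetilde w(1,t)$ (the known functions $m$ and $\phi$ control only a zeroth moment, not traces). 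Your appeal to ``the maximum principle'' is circular here: the drift $\widehat\alpha_1-2\widehat u$ in \eqref{eq: foced FPE ON'} and the Robin coefficients in \eqref{B1}--\eqref{B2} contain $\widehat u$, which is built from precisely the traces you are trying to bound, so comparison arguments yield only a self-referential inequality. The closed-loop nonlinearity $(\widehat u[\widetilde w]\,\widetilde w)_z$ is quadratic, and finite-time blow-up cannot be excluded by linear reasoning alone.

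This is exactly the difficulty the paper's proof (Appendix~\ref{Appendix: well-posedness proof}) is organized to avoid. Instead of contraction plus continuation, the paper runs a Picard-type successive approximation on the whole interval $[0,T]$: the nonlocal coefficient is frozen at the previous iterate $G_{\widetilde w_{n-1}}$, so each approximating problem \eqref{+35} is linear and is solved globally on $[0,T]$ by the Ladyzenskaja--Solonnikov--Uralceva theory (with the rough data $\delta$, $\overline\sigma$, $\underline\sigma$ simultaneously mollified); the analytical burden is then uniformity of the estimates in $n$, not continuation in $t$. That uniformity is obtained in the paper's Step~2 from the $L^1$ estimates \eqref{-38}, proved by testing with the $C^2$ regularization $\rho_\varepsilon$ of the absolute value, combined with the mass identity \eqref{-34} and assumption \eqref{+28}, which keep $G_{\widetilde w_n}$ and the structure constants of the linear theory under control; the $C^{2,1}$ solution is then extracted by compactness of the embedding $\mathcal{H}^{2+\theta,1+\frac{\theta}{2}}(\overline{Q}_{T})\hookrightarrow\hookrightarrow C^{2,1}(\overline{Q}_{T})$ rather than by any fixed-point property. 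To repair your proposal you would need to supply the missing a priori trace/sup estimate for the genuinely nonlinear closed-loop equation itself (an analogue of the paper's Step~2), or restructure the iteration as the paper does so that every PDE you solve is linear on the full time interval.
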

By virtue of the variable transformation: $z=\frac{x-\underline{x}}{\Delta x}$, Theorem \ref{well-posedness} is a direct result of the following proposition, whose proof is provided in Appendix~\ref{Appendix: well-posedness proof}.
\begin{proposition}\label{well-posedness}
The closed-loop system composed of \eqref{eq: foced FPE'}, \eqref{eq: output'}, \eqref{eq: control u}, \eqref{eq: control Gamma}, \eqref{eq: error dynamics 1}, and \eqref{eq: stabilizer} admits a distributional solution $(\widetilde{w},\widetilde{v})\in C^{2,1}(\overline{Q}_{\infty})\times C^{2,1}(\overline{Q}_{\infty})$.
\end{proposition}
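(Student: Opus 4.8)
The plan is to exploit the cascade (triangular) structure of the closed-loop system. Observe from \eqref{eq: control u} and \eqref{eq: stabilizer} that the feedback $\widehat{u}(t)$ is a functional of $\widetilde{w}$ alone (through the traces $\widetilde{w}(0,t),\widetilde{w}(1,t)$, the integrals $\int_0^1\widehat{\alpha}_1\widetilde{w}\,dz$, $\int_0^1 z\widetilde{w}\,dz$, $\int_0^1\widetilde{w}\,dz$, and the output $y$ entering $\phi$), and does not involve $\widetilde{v}$. Hence I would first solve the self-contained nonlinear, nonlocal subsystem \eqref{eq: foced FPE ON'}, \eqref{B1}, \eqref{B2}, \eqref{I1} (with $\widehat{u}$ given by \eqref{eq: control u}, \eqref{eq: stabilizer}) for $\widetilde{w}$, and then, with $\widehat{u}$ now a known function of $t$, solve the \emph{linear} parabolic problem \eqref{eq: foced FPE OFF'}, \eqref{B3}, \eqref{B4}, \eqref{I1} for $\widetilde{v}$. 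A decisive preliminary observation is that the denominator of \eqref{eq: control u} is governed by the data alone: integrating \eqref{eq: foced FPE ON'} over $z\in(0,1)$ and using \eqref{B1}, \eqref{B2} to pin the boundary flux to the prescribed data (so that the $\widehat{u}$-dependence drops out), one obtains
\[
\int_0^1\widetilde{w}(z,t)\,dz = \int_0^1\widetilde{w}^0\,dz + \int_0^t\Big(\widehat{\overline{\sigma}}-\widehat{\underline{\sigma}}\Big)\,ds + \int_0^t\!\!\int_0^1\widetilde{\delta}\,dz\,ds,
\]
which is, up to the factor $\Delta x$, exactly the quantity bounded below in \eqref{+28}. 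Thus $\widehat{u}(t)$ is well-defined with a uniform-in-time nonvanishing denominator, and no circular dependence on the unknown arises.

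For the $\widetilde{w}$-subsystem I would combine a regularization of the irregular data with a fixed-point argument. Since $\widetilde{\delta}\in L^1$ and $\widehat{\overline{\sigma}},\widehat{\underline{\sigma}}\in L^1_{\text{loc}}$ lie below the regularity required for classical Schauder theory, I would first approximate them by smooth functions $\widetilde{\delta}^{(n)},\widehat{\overline{\sigma}}^{(n)},\widehat{\underline{\sigma}}^{(n)}$ converging in $L^1$ (resp. $L^1_{\text{loc}}$) while preserving the compatibility conditions \eqref{09021}, \eqref{09022} and the lower bound \eqref{+28}. For each $n$, I would set up the map $\Phi:\widehat{u}\mapsto\widetilde{w}\mapsto\widehat{u}$ on a ball of a Hölder space $C^{\gamma}([0,T])$: given $\widehat{u}$, the equation \eqref{eq: foced FPE ON'} with the flux (Robin-type) conditions \eqref{B1}, \eqref{B2} is a linear parabolic IBVP whose unique $C^{2+\gamma,1+\gamma/2}(\overline{Q}_T)$ solution is furnished by the linear theory already employed in \cite{ZLZL:2019}, the compatibility conditions guaranteeing regularity up to $t=0$. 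Parabolic Schauder estimates then deliver the boundary traces and integral functionals of $\widetilde{w}$ as Hölder functions of $t$, so the value recomputed from \eqref{eq: control u}, \eqref{eq: stabilizer} lands back in the same space. Continuous dependence of $\widetilde{w}$, its traces, and its integrals on the scalar coefficient $\widehat{u}$ yields a contraction on a short interval $[0,T_\ast]$, hence a local fixed point; the a priori lower bound on the denominator together with energy and maximum-principle bounds then allow re-initialization with a uniform step and continuation to all of $\overline{Q}_\infty$.

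Finally, I would pass to the limit $n\to\infty$. Uniform estimates on $(\widetilde{w}^{(n)},\widehat{u}^{(n)})$ in the relevant Hölder and energy norms, together with compactness, yield a limit $(\widetilde{w},\widehat{u})$ with $\widetilde{w}\in C^{2,1}(\overline{Q}_\infty)$; testing \eqref{eq: foced FPE ON'} (which holds classically at level $n$) against $\varphi\in C_0^\infty$, integrating, and using $\widetilde{\delta}^{(n)}\to\widetilde{\delta}$ and $\widehat{\overline{\sigma}}^{(n)},\widehat{\underline{\sigma}}^{(n)}\to\widehat{\overline{\sigma}},\widehat{\underline{\sigma}}$ shows that the limit satisfies \eqref{eq: foced FPE ON'}, \eqref{B1}, \eqref{B2}, \eqref{I1} in the sense of distributions. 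With $\widehat{u}$ in hand, the $\widetilde{v}$-problem is handled identically but without the fixed point, being linear, which completes the pair $(\widetilde{w},\widetilde{v})\in C^{2,1}(\overline{Q}_\infty)\times C^{2,1}(\overline{Q}_\infty)$. I expect the \textbf{main obstacle} to be the interaction between the nonlocal, nonlinear feedback coefficient and the low regularity of $\widetilde{\delta},\widehat{\overline{\sigma}},\widehat{\underline{\sigma}}$: one must ensure that the fixed-point construction and the continuous-dependence estimates remain uniform along the regularization, so that both the $C^{2,1}$ regularity and the distributional identities survive the limit. The cancellation of the control term in the mass balance displayed above is precisely what decouples the control denominator from this limiting process and keeps the whole argument tractable.
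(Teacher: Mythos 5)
Your proposal is correct in substance and shares the paper's overall architecture: the decisive mass identity (integrate \eqref{eq: foced FPE ON'} and use \eqref{B1}--\eqref{B2} so the $\widehat{u}$-dependence cancels, making the denominator of \eqref{eq: control u} a data-determined quantity bounded below via \eqref{+28}); regularization of $\widetilde{\delta},\widehat{\overline{\sigma}},\widehat{\underline{\sigma}}$ by smooth data preserving the compatibility conditions and the mass lower bound; classical solvability of the smoothed problems via the linear parabolic theory of \cite{Ladyzenskaja:1968}; the cascade structure in which the $\widetilde{v}$-problem is linear once $\widetilde{w}$ is known; and a final compactness passage $\mathcal{H}^{2+\theta,1+\frac{\theta}{2}}(\overline{Q}_T)\hookrightarrow\hookrightarrow C^{2,1}(\overline{Q}_T)$ to obtain a $C^{2,1}$ distributional solution. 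Where you genuinely diverge is the mechanism for resolving the nonlocal nonlinearity. The paper never sets up a contraction: it builds a single sequence $\widetilde{w}_n$ in which the nonlocal coefficient is \emph{lagged} ($G_{\widetilde{w}_{n-1}}$ enters the equation for $\widetilde{w}_n$) and the data are regularized with the same index, so every problem in the sequence is linear; existence follows by induction from \cite[Theorem 7.4, Chap.~V]{Ladyzenskaja:1968}, and one limit passage (uniform-in-$n$ H\"older bounds plus compactness) handles linearization and regularization simultaneously. You instead solve, for each fixed regularized datum, the genuinely nonlinear problem by a Banach fixed point for $\widehat{u}$ in $C^{\gamma}([0,T_*])$ with continuation, and only afterwards take the regularization limit. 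Your two-level scheme buys an actual (locally unique) solution of the nonlinear problem at every approximation level, and it sidesteps a delicate point in the paper's Step~3, namely that the lagged iterate $\widetilde{w}_{n_k-1}$ must converge to the same limit as $\widetilde{w}_{n_k}$ along the extracted subsequence; the price is the continuous-dependence machinery: to make your map contract on a short interval you must exploit that differences of solutions vanish at $t=0$ (to extract a small factor in $T_*$ from the Schauder estimates), and you must restrict the fixed-point ball to coefficients satisfying $\widehat{u}(0)=G_{\widetilde{w}^0}(0)$ so that the compatibility conditions of the linear theory hold for \emph{every} trial coefficient, not just at the fixed point --- a constraint your sketch leaves implicit but which is invariant under your map and therefore easily added. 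The paper's route is shorter because it needs only existence for linear problems and uniform bounds, never Lipschitz dependence of parabolic solutions on coefficients; yours is more robust as a template for proving uniqueness and continuous dependence alongside existence.
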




{It is worth noting that if $\delta$, $\overline{\sigma}$ and $\underline{\sigma}$ are continuous, such distributional solutions of the closed-loop system composed of  \eqref{eq: foced FPE}, \eqref{eq: BC 2}, \eqref{initialwv}, \eqref{eq: output}, \eqref{eq: error dynamics 1}, \eqref{eq: control u'}, \eqref{eq: control Gamma'}, and \eqref{eq: stabilizer} (or \eqref{eq: foced FPE'}, \eqref{eq: output'}, \eqref{eq: control u}, \eqref{eq: control Gamma}, \eqref{eq: error dynamics 1}, and \eqref{eq: stabilizer})  become classical solutions that satisfy the equations pointwisely.}

Regarding the uniqueness of the distributional solution of the closed-loop system, we have {the following theorem, whose proof is provided in Appendix \ref{Appendix: uniqueness}.}
\begin{theorem}\label{uniqueness}
Let $(w_1,v_1),(w_2,v_2)\in C^{2,1}(\overline{\Omega}_\infty)\times C^{2,1}(\overline{\Omega}_\infty)$ be two distributional solutions of the closed-loop system composed of \eqref{eq: foced FPE}, \eqref{eq: BC 2}, \eqref{initialwv}, \eqref{eq: output}, \eqref{eq: error dynamics 1}, \eqref{eq: control u'}, \eqref{eq: control Gamma'}, and \eqref{eq: stabilizer}. If for any $t\in \mathbb{R}_{\geq 0}$,
 $w_1(\overline{x}(t),\cdot)-w_1(\underline{x}(t),\cdot)={w}_2(\overline{x}(t),\cdot)-w_2(\underline{x}(t),\cdot)$ in $\mathbb{R}_{\geq 0}$, then  $(w_1,v_1)=(w_2,v_2)$ in $\overline{\Omega}_\infty$.
\end{theorem}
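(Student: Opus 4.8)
The plan is to exploit the change of variables $z=\frac{x-\underline{x}}{\Delta x}$ so that, exactly as Proposition~\ref{well-posedness} reduces \eqref{eq: foced FPE} to \eqref{eq: foced FPE'}, uniqueness in $\overline{\Omega}_\infty$ is equivalent to uniqueness on the fixed domain $\overline{Q}_\infty$. Write $\widetilde{W}=\widetilde{w}_1-\widetilde{w}_2$ and $\widetilde{V}=\widetilde{v}_1-\widetilde{v}_2$ for the normalized images of the two solutions, and denote by $\widehat{u}_1,\widehat{u}_2$ the associated controls given by \eqref{eq: control u}. Since $\widetilde{\delta}$, $\widehat{\overline{\sigma}}$, $\widehat{\underline{\sigma}}$ act identically on both solutions and the initial data \eqref{I1} are the same, one has $\widetilde{W}(\cdot,0)=\widetilde{V}(\cdot,0)=0$. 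The decisive structural observation is that the control \eqref{eq: control u} depends on $\widetilde{w}$ alone, so the $\widetilde{W}$--dynamics decouple from $\widetilde{V}$; I would first prove $\widetilde{W}\equiv0$ and then treat $\widetilde{V}$ as a linear problem. Subtracting the two $\widetilde{w}$--equations and regrouping the flux gives
\begin{equation*}
\widetilde{W}_t=\left(\widehat{\beta}\widetilde{W}_z-(\widehat{\alpha}_1-2\widehat{u}_1)\widetilde{W}\right)_z+2(\widehat{u}_1-\widehat{u}_2)(\widetilde{w}_2)_z,
\end{equation*}
together with the boundary relations $\widehat{\beta}\widetilde{W}_z(j,t)-(\widehat{\alpha}_1(j,t)-2\widehat{u}_1(t))\widetilde{W}(j,t)=-2(\widehat{u}_1(t)-\widehat{u}_2(t))\widetilde{w}_2(j,t)$ for $j\in\{0,1\}$ obtained from \eqref{B1}--\eqref{B2}.

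The heart of the argument is to control the nonlocal factor $\widehat{u}_1-\widehat{u}_2$. Writing each control in \eqref{eq: control u} as $\widehat{u}_i=-N_i/D_i$ with $D_i=\int_0^1\widetilde{w}_i\,\diff z$, the hypothesis of the theorem, which in normalized coordinates reads $\widetilde{w}_1(1,\cdot)-\widetilde{w}_1(0,\cdot)=\widetilde{w}_2(1,\cdot)-\widetilde{w}_2(0,\cdot)$, cancels the only trace term $\widehat{\beta}(\widetilde{w}(1,\cdot)-\widetilde{w}(0,\cdot))$ appearing in $N_1-N_2$. Consequently both $N_1-N_2$ and $D_1-D_2$ become linear functionals of $\widetilde{W}$ integrated against bounded weights (coming from $\widehat{\alpha}_1+\Delta x\dot{b}$ and, through $\phi$ and \eqref{eq: output'}, from $\widehat{a}z+\widehat{b}$). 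Since \eqref{+28} together with the mass balance of $\widetilde{w}$ guarantees $|D_i(t)|\geq\delta_0/(2\Delta x)>0$, and since $N_i,D_i$ and the traces of $\widetilde{w}_2$ are bounded on any $[0,T]$ by continuity of the solutions on $\overline{Q}_\infty$, I obtain
\begin{equation*}
|\widehat{u}_1(t)-\widehat{u}_2(t)|\leq C_T\int_0^1|\widetilde{W}(z,t)|\,\diff z\leq C_T\|\widetilde{W}(\cdot,t)\|_{L^2(0,1)},\qquad t\in[0,T].
\end{equation*}

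Next I would run an $L^2$ energy estimate on $[0,T]$. Multiplying the $\widetilde{W}$--equation by $\widetilde{W}$, integrating over $(0,1)$ and integrating by parts yields
\begin{equation*}
\frac12\frac{\diff{}}{\diff{t}}\|\widetilde{W}\|_{L^2}^2=-\widehat{\beta}\|\widetilde{W}_z\|_{L^2}^2+\int_0^1(\widehat{\alpha}_1-2\widehat{u}_1)\widetilde{W}\widetilde{W}_z\,\diff z+\mathcal{B}(t)+2(\widehat{u}_1-\widehat{u}_2)\int_0^1(\widetilde{w}_2)_z\widetilde{W}\,\diff z,
\end{equation*}
where $\mathcal{B}(t)=-2(\widehat{u}_1-\widehat{u}_2)\left[\widetilde{w}_2(1,t)\widetilde{W}(1,t)-\widetilde{w}_2(0,t)\widetilde{W}(0,t)\right]$ collects the boundary contributions. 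The dissipative term $-\widehat{\beta}\|\widetilde{W}_z\|_{L^2}^2$ is the key resource: the interior cross term is absorbed by Young's inequality, and the last nonlocal term is bounded by $C_T\|\widetilde{W}\|_{L^2}^2$ via the control estimate above. The delicate part is $\mathcal{B}(t)$, for which I would control the traces through the one--dimensional interpolation (trace) inequality $|\widetilde{W}(j,t)|\leq\varepsilon\|\widetilde{W}_z\|_{L^2}+C_\varepsilon\|\widetilde{W}\|_{L^2}$; inserting the control estimate and choosing $\varepsilon$ small absorbs $\mathcal{B}(t)$ into $\tfrac{\widehat{\beta}}{2}\|\widetilde{W}_z\|_{L^2}^2$ plus $C_T\|\widetilde{W}\|_{L^2}^2$. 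This leaves $\frac{\diff{}}{\diff{t}}\|\widetilde{W}\|_{L^2}^2\leq 2C_T\|\widetilde{W}\|_{L^2}^2$, and Gronwall's inequality with $\widetilde{W}(\cdot,0)=0$ forces $\widetilde{W}\equiv0$ on $[0,T]$; as $T$ is arbitrary, $\widetilde{w}_1=\widetilde{w}_2$ on $\overline{Q}_\infty$.

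Once $\widetilde{w}_1=\widetilde{w}_2$, the controls coincide, $\widehat{u}_1=\widehat{u}_2=:\widehat{u}$, so $\widetilde{V}$ solves the \emph{linear} homogeneous problem $\widetilde{V}_t=(\widehat{\beta}\widetilde{V}_z-(\widehat{\alpha}_0-2\widehat{u})\widetilde{V})_z$ with the now homogeneous boundary conditions $\widehat{\beta}\widetilde{V}_z(j,t)-(\widehat{\alpha}_0(j,t)-2\widehat{u}(t))\widetilde{V}(j,t)=0$ and $\widetilde{V}(\cdot,0)=0$. The same energy estimate, whose boundary term now vanishes, gives $\frac{\diff{}}{\diff{t}}\|\widetilde{V}\|_{L^2}^2\leq C_T\|\widetilde{V}\|_{L^2}^2$, whence $\widetilde{V}\equiv0$; transforming back via $z=\frac{x-\underline{x}}{\Delta x}$ yields $(w_1,v_1)=(w_2,v_2)$ in $\overline{\Omega}_\infty$. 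I expect the main obstacle to be precisely the boundary term $\mathcal{B}(t)$: this is where the hypothesis on the equality of boundary differences is indispensable, since without it $N_1-N_2$ would retain the trace term $\widehat{\beta}(\widetilde{W}(1,\cdot)-\widetilde{W}(0,\cdot))$, making $\widehat{u}_1-\widehat{u}_2$ no longer controllable by $\|\widetilde{W}\|_{L^2}$ alone and destroying the absorption of $\mathcal{B}(t)$ into the dissipation.
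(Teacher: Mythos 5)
Your proposal is correct and follows essentially the same route as the paper's proof: pass to the fixed domain, observe that the $\widetilde{w}$-difference decouples, bound the control difference by $\|\widetilde{W}\|_{L^1}$ using the hypothesis on the boundary traces together with the mass identity (Proposition~\ref{prop. 8}) and the lower bound \eqref{+28}, close with an $L^2$ energy estimate and Gronwall, and finally treat the $\widetilde{v}$-difference as a linear homogeneous problem. The only divergence is bookkeeping: the paper keeps the nonlocal term $2\widetilde{w}_1(G_{\widetilde{w}_1}-G_{\widetilde{w}_2})$ inside the flux, so the subtracted boundary conditions assert that the \emph{entire} flux vanishes at $z=0,1$ and the integration by parts produces no boundary term at all (the nonlocal term instead appears paired with $\widetilde{W}_z$ and is absorbed by Young's inequality into the dissipation), whereas your extraction of that term out of the divergence is what creates $\mathcal{B}(t)$ and forces the extra trace-interpolation step --- so the ``delicate part'' you anticipate is a genuine obstacle in your arrangement, which you handle correctly, but it simply never arises in the paper's.
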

\subsection{Stability analysis}\label{Sec: Stability Analysis}
We assess first the stability of the error dynamics~\eqref{eq: error dynamics 1} with the control given in \eqref{eq: stabilizer}. As the regulation error, $e(t)$, is governed by a first-order finite dimensional linear system, it is straightforward to have the following result.
\begin{theorem}\label{theorem 12}
The regulation error $e(t)$ is determined by
\begin{equation*}
  e(t)=e(0)\e^{-k_0t}+\int_{0}^t\Gamma(t)\e^{-k_0(t-s)}\diff s.\label{1125+4}
\end{equation*}
Furthermore, if $|\Gamma(t)| \leq \Gamma_{\infty}$ with a positive constant $\Gamma_{\infty}$ for all $t > 0$, then
\begin{equation*}
|e(t)|\leq |e(0)|\e^{-k_0t}+\frac{\Gamma_{\infty}}{k_0}\left(1-\e^{-k_0t}\right),\forall t>0.
\end{equation*}
\end{theorem}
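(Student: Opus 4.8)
The plan is to treat the error dynamics \eqref{eq: error dynamics 1} closed by the stabilizer \eqref{eq: stabilizer} as a scalar first-order linear ordinary differential equation and to solve it explicitly by the variation-of-constants (Duhamel) formula, then bound the resulting integral. Inserting \eqref{eq: stabilizer} into \eqref{eq: error dynamics 1} cancels the $\dot{y}_d$ terms and leaves the closed-loop error equation $\dot{e}(t)=-k_0 e(t)+\Gamma(t)$, $e(0)=e_0$.

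First I would multiply through by the integrating factor $\e^{k_0 t}$, so that the left-hand side becomes the exact derivative $\frac{\diff}{\diff t}\big(\e^{k_0 t}e(t)\big)=\e^{k_0 t}\Gamma(t)$. Integrating this identity over $[0,t]$ and using $e(0)=e_0$ yields $\e^{k_0 t}e(t)-e(0)=\int_0^t \e^{k_0 s}\Gamma(s)\diff s$; multiplying by $\e^{-k_0 t}$ gives the claimed representation $e(t)=e(0)\e^{-k_0 t}+\int_0^t \e^{-k_0(t-s)}\Gamma(s)\diff s$. Because under \textbf{(A1)}--\textbf{(A3)} the forcing $\Gamma$ defined in \eqref{eq: control Gamma'} is only locally integrable (it is built from $\overline{\sigma},\underline{\sigma}\in L^1_{\text{loc}}(\mathbb{R}_{\geq 0})$ and a spatial integral of $\delta\in L^1$), this identity is read in the Carath\'{e}odory sense: $e$ is absolutely continuous and satisfies the ODE for almost every $t$, which is exactly what the Duhamel representation delivers.

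For the estimate I would apply the triangle inequality to this representation and use $|\Gamma(s)|\le \Gamma_\infty$ together with $\e^{-k_0(t-s)}>0$ to obtain $|e(t)|\le |e(0)|\e^{-k_0 t}+\Gamma_\infty\int_0^t \e^{-k_0(t-s)}\diff s$. The remaining integral is elementary: since $k_0>0$ by \textbf{(A1)}, one has $\int_0^t \e^{-k_0(t-s)}\diff s=\frac{1}{k_0}\big(1-\e^{-k_0 t}\big)$, and substituting this bound yields the asserted inequality $|e(t)|\le|e(0)|\e^{-k_0 t}+\frac{\Gamma_\infty}{k_0}\big(1-\e^{-k_0 t}\big)$.

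There is no substantive obstacle here, as the statement reduces to solving a scalar linear ODE with bounded forcing; the only point deserving care is that $\Gamma$ is not continuous, so the solution must be understood in the absolutely continuous sense rather than the classical one. I would therefore record at the outset the local integrability of $\Gamma$ guaranteed by \textbf{(A2)}--\textbf{(A3)}, which legitimizes both the Duhamel formula and the subsequent passage of the absolute value under the integral sign.
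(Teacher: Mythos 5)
Your proposal is correct and follows exactly the route the paper intends: the paper offers no explicit proof, remarking only that the result is ``straightforward'' since $e(t)$ is governed by the scalar first-order linear system $\dot{e}=-k_0e+\Gamma$, which is precisely the integrating-factor/Duhamel computation and triangle-inequality bound you carry out (your added remark on interpreting the ODE in the absolutely continuous sense, given that $\Gamma$ is merely locally integrable, is a sound refinement rather than a departure). Note also that your representation correctly places $\Gamma(s)$ under the integral, whereas the paper's displayed formula contains the typo $\Gamma(t)$.
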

To assure the closed-loop stability, we need to prove that the internal dynamics composed of \eqref{eq: foced FPE}, \eqref{eq: output} and \eqref{eq: control u'} are stable provided Theorem~\ref{theorem 12} holds true. Towards this aim, we establish some essential properties of the closed-loop solution stated in the following 2~propositions.
\begin{proposition}\label{prop. 8} Let $(w,v)\in C^{2,1}(\overline{\Omega}_\infty)\times C^{2,1}(\overline{\Omega}_\infty)$ be a distributional solution of the closed-loop system composed of \eqref{eq: foced FPE}, \eqref{eq: BC 2}, \eqref{initialwv}, \eqref{eq: output}, \eqref{eq: error dynamics 1}, \eqref{eq: control u'}, \eqref{eq: control Gamma'}, and \eqref{eq: stabilizer}. For any $t\in \mathbb{R}_{\geq 0}$, it holds:
\newline
\text{(i)}  $\displaystyle\int_{\underline{x}(t)}^{\overline{x}(t)}w(x,t)\diff x
    =  \displaystyle\int_{\underline{x}(0)}^{\overline{x}(0)}w^0(x)\diff x +\displaystyle\int_{0}^t\int_{\underline{x}(s)}^{\overline{x}(s)} \delta(x,s)\diff x\diff s+{ \int_{0}^t\left(\overline{\sigma} (s)-\underline{\sigma}(s)\right) \diff s }$;
\newline
\text{(ii)} $\displaystyle\int_{\underline{x}(t)}^{\overline{x}(t)}v(x,t)\diff x
    =\displaystyle\int_{\underline{x}(0)}^{\overline{x}(0)}v^0(x)\diff x -\displaystyle\int_{0}^t\int_{\underline{x}(s)}^{\overline{x}(s)} \delta(x,s)\diff x\diff s- { \int_{0}^t\left(\overline{\sigma} (s)-\underline{\sigma}(s)\right) \diff s }$.\\
Therefore, \eqref{agg} holds true.
\end{proposition}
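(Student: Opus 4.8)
The plan is to track the total number of loads in the ON-state, $M_w(t):=\int_{\underline{x}(t)}^{\overline{x}(t)} w(x,t)\,\diff x$, and to show that its evolution is driven solely by the interior switching flux $\delta$ and the boundary switching fluxes $\overline{\sigma},\underline{\sigma}$, namely $\dot M_w(t)=\overline{\sigma}(t)-\underline{\sigma}(t)+\int_{\underline{x}(t)}^{\overline{x}(t)}\delta(x,t)\,\diff x$; integrating this relation from $0$ to $t$ yields (i). Statement (ii) follows from the identical computation applied to the OFF-state equation \eqref{eq: foced FPE OFF}, whose source $-\delta$ and boundary data $-\overline{\sigma},-\underline{\sigma}$ carry the opposite sign, and adding (i) and (ii) makes every switching term cancel, which is precisely the mass-conservation identity \eqref{agg}. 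Note that only the Fokker--Planck equations \eqref{eq: foced FPE} and the boundary conditions \eqref{eq: BC 2} enter this argument; the explicit form of the control is irrelevant to the mass balance.

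The core computation proceeds in four moves. First, differentiating $M_w$ by the Leibniz rule for a moving interval produces $\int_{\underline{x}}^{\overline{x}} w_t\,\diff x+\dot{\overline{x}}\,w(\overline{x},t)-\dot{\underline{x}}\,w(\underline{x},t)$. Second, I substitute $w_t=(\beta w_x-(\alpha_1-u)w)_x+\delta$ from \eqref{eq: foced FPE ON} and integrate the flux-divergence term in $x$ by the fundamental theorem of calculus, which returns the values of the flux $\beta w_x-(\alpha_1-u)w$ at $\overline{x}$ and $\underline{x}$. Third, I rewrite those boundary flux values using \eqref{eq: BC 2}: comparing the flux with the boundary operator $\mathscr{B}_1$ shows $\beta w_x(\overline{x},t)-(\alpha_1(\overline{x})-u)w(\overline{x},t)=\overline{\sigma}(t)-\dot{\overline{x}}\,w(\overline{x},t)$, and likewise at $\underline{x}$ with $\underline{\sigma}$. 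The decisive observation is that the offset $-\dot{\overline{x}}\,w(\overline{x},t)$ carried by the moving-boundary term inside $\mathscr{B}_1$ exactly cancels the boundary-velocity term $+\dot{\overline{x}}\,w(\overline{x},t)$ generated by the Leibniz rule (and similarly at $\underline{x}$). Fourth, after this cancellation one is left with $\dot M_w=\overline{\sigma}-\underline{\sigma}+\int_{\underline{x}}^{\overline{x}}\delta\,\diff x$, and integration in time completes (i).

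The main obstacle is regularity: by Assumptions \textbf{(A2)}--\textbf{(A3)} the sources $\delta,\overline{\sigma},\underline{\sigma}$ are only locally integrable and generically discontinuous in $t$, so $M_w$ need not be classically differentiable and the pointwise Leibniz and integration-by-parts steps above are not literally valid at the exceptional times. I would circumvent this by carrying out the computation in its integrated space--time form rather than by differentiating. Concretely, since $(w,v)\in C^{2,1}(\overline{\Omega}_\infty)$, I would apply a Gauss--Green identity to the distributional equation \eqref{eq: foced FPE ON} over the curved region $R_t=\{(x,s):0\le s\le t,\ \underline{x}(s)\le x\le\overline{x}(s)\}$; equivalently, and more cleanly, I would pass to the fixed-boundary coordinate $z=\tfrac{x-\underline{x}}{\Delta x}$, in which the domain becomes the fixed rectangle $(0,1)\times(0,t)$ and the divergence theorem applies directly, interpreting every product in the sense of distribution and replacing $\mathbf{1}_{[0,t]}(s)$ by a smooth approximation before passing to the limit. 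The lateral boundary integrals then reproduce exactly $\pm\overline{\sigma},\pm\underline{\sigma}$ after the boundary-velocity cancellation described above, the top and bottom faces produce $M_w(t)$ and $M_w(0)$, and the interior integral of $\delta$ is finite since $\delta\in L^1(\Omega_T)$. This yields (i), hence (ii) by symmetry, and summing gives \eqref{agg}.
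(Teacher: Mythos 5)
Your proposal is correct, and its headline computation takes a genuinely different (though equivalent) route from the paper's. The paper never invokes the Leibniz rule in the moving frame: it passes immediately to the normalized coordinate $z=\frac{x-\underline{x}}{\Delta x}$, so the mass becomes $\int_0^1\widetilde{w}\,\diff z$ over a fixed interval, differentiates under the integral sign, applies the fundamental theorem of calculus to the divergence term, and reads off the flux values from the transformed boundary conditions \eqref{B1}--\eqref{B2}, whose coefficient $2\widehat{u}$ already encodes both the advection velocity $u$ and the boundary velocity $\dot{\underline{x}}=u$; this gives $\frac{\diff{}}{\diff{t}}\int_0^1\widetilde{w}\,\diff z=\widehat{\overline{\sigma}}-\widehat{\underline{\sigma}}+\int_0^1\widetilde{\delta}\,\diff z$ at once, after which it integrates in time and transforms back (part (ii) analogously). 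Your primary derivation instead stays in the original frame, and the cancellation you single out is exactly right: by \eqref{eq: BC 2} the flux satisfies $\beta w_x(\overline{x},t)-(\alpha_1(\overline{x})-u)w(\overline{x},t)=\overline{\sigma}(t)-\dot{\overline{x}}\,w(\overline{x},t)$, and this offset annihilates the $+\dot{\overline{x}}\,w(\overline{x},t)$ produced by the Leibniz rule (likewise at $\underline{x}$, and for $\mathscr{B}_2$ with data $-\overline{\sigma},-\underline{\sigma}$ in part (ii)), leaving $\dot M_w=\overline{\sigma}-\underline{\sigma}+\int_{\underline{x}}^{\overline{x}}\delta\,\diff x$. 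This is precisely what the paper's change of variables accomplishes implicitly; your version buys a transparent explanation of why the boundary motion drops out of the mass balance, while the paper's buys brevity and reuse of the fixed-domain setting from its well-posedness appendix. Your fallback for the regularity obstruction (integrated space--time Gauss--Green in the $z$-coordinates, with the equation interpreted distributionally and the time cut-off mollified) is in substance the paper's own route, and is in fact more scrupulous than the paper, which substitutes the PDE into $\int_0^1\widetilde{w}_t\,\diff z$ as if it held pointwise even though $\delta,\overline{\sigma},\underline{\sigma}$ are only locally integrable --- a step that is legitimate there because the $C^{2,1}$ regularity of the solution forces the distributional identity to hold almost everywhere, so the time integration goes through unchanged.
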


\begin{proof}
Let $\widetilde{w}$ satisfy \eqref{eq: foced FPE'}. We have then
\begin{align*}
 \frac{\text{d} }{\text{d}t}\int_{0}^{1} \widetilde{w}(z,t)\diff z
 &=\int_{0}^{1} \widetilde{w}_t(z,t)\diff z\notag\\
 &=\int_{0}^{1}\left(\widehat{\beta}\widetilde{w}_z- (\widehat{\alpha}_{1}-2u)\widetilde{w}\right)_z\diff z+\int_{0}^{1} \widetilde{\delta}(z,t)\diff z\notag\\
 &= \widehat{\overline{\sigma}} (t)-\widehat{\underline{\sigma}} (t)+\int_{0}^{1} \widetilde{\delta}(z,t)\diff z.
\end{align*}
It follows that
\begin{align*}
 \int_{0}^{1} \widetilde{w}(z,t)\diff z = \int_{0}^{1} \widetilde{w}^0(z)\diff z + \int_{0}^t\int_{0}^{1} \widetilde{\delta}(z,s)\diff z\diff s+  \int_{0}^t\left(\widehat{\overline{\sigma}} (s)-\widehat{\underline{\sigma}} (s)\right)\diff s.
\end{align*}
Using the transformation $z=\frac{x-\underline{x}}{\Delta x}$, we obtain the result claimed in~(i). The result claimed in~(ii) can be obtained in a similar way.
$\hfill\Box$
\end{proof}

\begin{proposition}\label{prop. 7}
Let $(w,v)\in C^{2,1}(\overline{\Omega}_\infty)\times C^{2,1}(\overline{\Omega}_\infty)$ be a distributional solution of the closed-loop system composed of \eqref{eq: foced FPE}, \eqref{eq: BC 2}, \eqref{initialwv}, \eqref{eq: output}, \eqref{eq: error dynamics 1}, \eqref{eq: control u'}, \eqref{eq: control Gamma'}, and \eqref{eq: stabilizer}. For any $t\in \mathbb{R}_{\geq 0}$, it holds:
\begin{align*}
(i)\quad\|w(\cdot,t)\|_{L^1(\underline{x}(t),\overline{x}(t))}\leq & \|w^0\|_{L^1(\underline{x}(0),\overline{x}(0))}+\displaystyle\int_0^t\int_{\underline{x}(s)}^{\overline{x}(s)}{\delta} (x,s)\text{sgn}(w(x,s)) \diff x\diff s{+\displaystyle\int_0^t \overline{\sigma} (s)\text{sgn}(w(\overline{x}(s),s))\diff s} \notag\\
&
{-\displaystyle\int_0^t \underline{\sigma}(s)\text{sgn}(w(\underline{x}(s),s))\diff s};\\
(ii)\quad\|v(\cdot,t)\|_{L^1(\underline{x}(t),\overline{x}(t))}\leq & \|v^0\|_{L^1(\underline{x}(0),\overline{x}(0))}-\displaystyle\int_0^t\int_{\underline{x}(s)}^{\overline{x}(s)}{\delta} (x,s)\text{sgn}(v(x,s)) \diff x\diff s{-\displaystyle\int_0^t \overline{\sigma} (s)\text{sgn}(v(\overline{x}(s),s))\diff s} \notag\\
&{+\displaystyle\int_0^t \underline{\sigma}(s)\text{sgn}(v(\underline{x}(s),s))\diff s},
\end{align*}
where $ \text{sgn}(f):=1 $ if $f>0$; $ \text{sgn}(f):=-1 $ if $f<0$; $ \text{sgn}(f):=0 $ if $f=0$.
\end{proposition}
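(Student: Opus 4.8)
The plan is to establish the estimate in the fixed-domain coordinates of \eqref{eq: foced FPE'} and then transform back via $z=\frac{x-\underline{x}}{\Delta x}$, exactly as in the proof of Proposition~\ref{prop. 8}; since $\Delta x$ is a positive constant, $\|w(\cdot,t)\|_{L^1(\underline{x}(t),\overline{x}(t))}=\Delta x\,\|\widetilde{w}(\cdot,t)\|_{L^1(0,1)}$, and the relations $\widehat{\overline{\sigma}}=\overline{\sigma}/\Delta x$, $\widehat{\underline{\sigma}}=\underline{\sigma}/\Delta x$ together with $\diff x=\Delta x\,\diff z$ convert the transformed estimate into the claimed one. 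Because $\text{sgn}$ is discontinuous and $\widetilde{w}$ may vanish on sets of positive measure, I would not differentiate $\int_0^1|\widetilde{w}|\diff z$ directly, but instead introduce a smooth, odd, nondecreasing regularization $\text{sgn}_{\varepsilon}$ with $\text{sgn}_{\varepsilon}\to\text{sgn}$ pointwise and $|\text{sgn}_{\varepsilon}|\le 1$ (e.g. $\text{sgn}_{\varepsilon}(s)=\tanh(s/\varepsilon)$), and work with the convex primitive $\Psi_{\varepsilon}(s)=\int_0^s\text{sgn}_{\varepsilon}(\tau)\diff\tau\to|s|$.

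The key computation is to write the $\widetilde{w}$-equation \eqref{eq: foced FPE ON'} in flux form $\widetilde{w}_t=J_z+\widetilde{\delta}$ with $J=\widehat{\beta}\widetilde{w}_z-(\widehat{\alpha}_1-2\widehat{u})\widetilde{w}$, multiply by $\text{sgn}_{\varepsilon}(\widetilde{w})$, and integrate over $(0,1)$, which yields $\frac{\diff{}}{\diff{t}}\int_0^1\Psi_{\varepsilon}(\widetilde{w})\diff z=\int_0^1\text{sgn}_{\varepsilon}(\widetilde{w})J_z\diff z+\int_0^1\text{sgn}_{\varepsilon}(\widetilde{w})\widetilde{\delta}\diff z$. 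Integrating the first term by parts and inserting the flux boundary conditions \eqref{B1}--\eqref{B2}, namely $J(1,t)=\widehat{\overline{\sigma}}(t)$ and $J(0,t)=\widehat{\underline{\sigma}}(t)$, produces the boundary contribution $\text{sgn}_{\varepsilon}(\widetilde{w}(1,t))\widehat{\overline{\sigma}}(t)-\text{sgn}_{\varepsilon}(\widetilde{w}(0,t))\widehat{\underline{\sigma}}(t)$ plus the interior term $-\int_0^1\text{sgn}_{\varepsilon}'(\widetilde{w})\widetilde{w}_z J\diff z$. Splitting $J$, the diffusion part $-\widehat{\beta}\int_0^1\text{sgn}_{\varepsilon}'(\widetilde{w})(\widetilde{w}_z)^2\diff z$ is nonpositive and may be discarded, which is precisely the mechanism that turns the identity into the desired inequality.

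What remains is the advection cross-term $\int_0^1\text{sgn}_{\varepsilon}'(\widetilde{w})(\widehat{\alpha}_1-2\widehat{u})\widetilde{w}\,\widetilde{w}_z\diff z$, and controlling it is the main obstacle. I would handle it by exploiting that $\widehat{u}(t)$ depends on $t$ only, so with the primitive $\Phi_{\varepsilon}(s)=\int_0^s\text{sgn}_{\varepsilon}'(\tau)\tau\,\diff\tau$ the integrand equals $(\widehat{\alpha}_1-2\widehat{u})\partial_z\Phi_{\varepsilon}(\widetilde{w})$; integrating by parts and using $\widehat{u}_z=0$ leaves $[(\widehat{\alpha}_1-2\widehat{u})\Phi_{\varepsilon}(\widetilde{w})]_0^1-\int_0^1(\widehat{\alpha}_1)_z\Phi_{\varepsilon}(\widetilde{w})\diff z$. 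A scaling argument shows $|\Phi_{\varepsilon}(s)|\le C\varepsilon$ uniformly on the bounded range of the continuous function $\widetilde{w}$ (for $\tanh$, $\Phi_{\varepsilon}(s)=\varepsilon\int_0^{s/\varepsilon}\sigma\,\mathrm{sech}^2\sigma\,\diff\sigma\le C\varepsilon$), so both pieces vanish as $\varepsilon\to0$, using that $(\widehat{\alpha}_1)_z$ is bounded by \textbf{(A3)} and that $\widehat{u}\in L^\infty_{\mathrm{loc}}$ (the denominator in \eqref{eq: control u} is bounded away from zero by Proposition~\ref{prop. 8} and \eqref{+28}, while the numerator is continuous).

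Finally I would integrate the resulting differential inequality in time over $[0,t]$ and let $\varepsilon\to0$: by dominated convergence (dominated by $|\widetilde{\delta}|\in L^1(Q_t)$ and by $|\widehat{\overline{\sigma}}|,|\widehat{\underline{\sigma}}|\in L^1_{\mathrm{loc}}$, the regularized signs being bounded by $1$) the forcing and boundary integrals converge to their $\text{sgn}$ counterparts, while $\int_0^1\Psi_{\varepsilon}(\widetilde{w})\to\int_0^1|\widetilde{w}|$. This gives the transformed estimate, and the change of variables recovers statement (i). Statement (ii) follows from the identical argument applied to the $\widetilde{v}$-equation \eqref{eq: foced FPE OFF'}, whose forcing is $-\widetilde{\delta}$ and whose flux boundary data are $-\widehat{\overline{\sigma}}$ and $-\widehat{\underline{\sigma}}$ (see \eqref{B3}--\eqref{B4}), producing exactly the sign reversals displayed in (ii).
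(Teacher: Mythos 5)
Your proof is correct, but it takes a genuinely different route from the paper's. The paper obtains Proposition~\ref{prop. 7} indirectly: the $L^1$ estimate is first established in Appendix~\ref{Appendix: well-posedness proof} for the classical approximating solutions $\widetilde{w}_n$ of \eqref{+35} (with mollified data $\widetilde{\delta}_n$, $\widehat{\overline{\sigma}}_n$, $\widehat{\underline{\sigma}}_n$), using the quartic spline regularization $\rho_\varepsilon$ of $|\cdot|$ and handling the advection cross term by Young's inequality together with the bound $\|\widetilde{w}_n\sqrt{\rho_\varepsilon''(\widetilde{w}_n)}\|^2\le\frac{3}{2}\varepsilon$, the resulting term $\widehat{\beta}\|\widetilde{w}_{nz}\sqrt{\rho_\varepsilon''(\widetilde{w}_n)}\|^2$ being absorbed by the (negative) diffusion term; Proposition~\ref{prop. 7} is then deduced by letting $n\to\infty$ in \eqref{-38}. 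You instead run the regularized-sign argument directly on the distributional solution $(\widetilde{w},\widetilde{v})$ itself --- legitimate because a $C^{2,1}$ solution of the equations in the sense of distribution satisfies \eqref{eq: foced FPE ON'} a.e.\ and the flux boundary conditions for a.e.\ $t$ (all other terms being continuous), so multiplying by $\text{sgn}_\varepsilon(\widetilde{w})$ and integrating is justified --- and you control the cross term by a second integration by parts via $\Phi_\varepsilon$, using $|\Phi_\varepsilon|\le C\varepsilon$, the boundedness of $(\widehat{\alpha}_1)_z$, and the boundedness of $\widehat{u}$ (correctly secured by \eqref{+28} and Proposition~\ref{prop. 8}, which is proved independently of Proposition~\ref{prop. 7}, so there is no circularity). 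Your route buys two things: the diffusion term is simply discarded rather than needed for absorption, and, more substantively, you bypass the paper's final passage $n\to\infty$ in \eqref{-38}, which is actually the delicate point of the paper's argument, since $\text{sgn}(\widetilde{w}_n)\to\text{sgn}(\widetilde{w})$ can fail on the set $\{\widetilde{w}=0\}$ (possibly of positive measure), whereas your dominated-convergence step only requires $\text{sgn}_\varepsilon(\widetilde{w})\to\text{sgn}(\widetilde{w})$ pointwise everywhere, which holds trivially (both vanish where $\widetilde{w}=0$). What the paper's route buys is economy: the estimate \eqref{-38} is needed anyway in Step~2 of the existence proof to obtain $n$-uniform bounds, so the proposition follows there at essentially no extra cost.
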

\begin{proof}
By virtue of the variable transformation $z=\frac{x-\underline{x}}{\Delta x}$, it suffices to let $n \rightarrow \infty$ in \eqref{-38} ( see Appendix \ref{Appendix: well-posedness proof}), which leads to the desired result.$\hfill\Box$
\end{proof}
It should be mentioned that Proposition~\ref{prop. 7} can still not guarantee the stability of the internal dynamics. In fact by checking Claim~(i), it is obvious that if $w$ has the same sign as $\delta$ over $\Omega_T$,
%
%
then $\int_0^t\int_{\underline{x}}^{\overline{x}}\delta(x,s) \text{sgn}(w(x,s)) \diff x\diff s =\int_0^t\int_{\underline{x}}^{\overline{x}}|\delta(x,s)|\diff x\diff s$. However, it is nature to do not impose any assumption on the global $L^1$-integrability of $\delta$. That is to say, $\int_0^t\int_{\underline{x}}^{\overline{x}}|\delta(x,s)|\diff x\diff s$ may tend to $\infty$ as $t\rightarrow \infty$. Similarly, $\int_0^t |\overline{\sigma} (s)|\diff s$ and $\int_0^t |\underline{\sigma} (s)|\diff s$ may also tend to $\infty$ as $t\rightarrow \infty$.
 Consequently, the boundedness of $\|w(\cdot,t)\|_{L^1(\underline{x}(t),\overline{x}(t))}$ cannot yet be guaranteed if $t\rightarrow \infty$. The same argument also holds true for Claim~(ii).

To establish the closed-loop stability of the internal dynamics, we note first that due to the fact that the considered problem is conservative in terms of the total number of the TCLs in the population, there must be positive constants $M,M'$ such that $\left|\iint_\Omega\delta(x,{t}) \diff x\diff t\right|<M$ for any Lebesgue measurable set $\Omega\subset \Omega_{\infty}$, {and $\left| \int_I\overline{\sigma} (s)\diff s\right|<M',\left|\int_I\underline{\sigma} (s)\diff s\right|<M'$} for any Lebesgue measurable set $I\subset [0,+\infty)$. We have then the following result.

\begin{theorem}\label{stability}
Assume that there exist positive constants $M,M'$ such that $\left|\iint_\Omega\delta(x,{t}) \diff x\diff t\right|<M$ for any Lebesgue measurable set $\Omega\subset \Omega_{\infty}$, and {$\left| \int_I\overline{\sigma} (s)\diff s\right|<M',\left|\int_I\underline{\sigma} (s)\diff s\right|<M'$} for any Lebesgue measurable set $I\subset [0,+\infty)$.
Let $(w,v)\in C^{2,1}(\overline{\Omega}_\infty)\times C^{2,1}(\overline{\Omega}_\infty)$ be a distributional solution of the closed-loop system composed of \eqref{eq: foced FPE}, \eqref{eq: BC 2}, \eqref{initialwv}, \eqref{eq: output}, \eqref{eq: error dynamics 1}, \eqref{eq: control u'}, \eqref{eq: control Gamma'}, and \eqref{eq: stabilizer}. For any $t\in \mathbb{R}_{\geq 0}$, it holds:
\newline
(i)\quad$
\|w(\cdot,t)\|_{L^1(\underline{x}(t),\overline{x}(t))}\leq  \|w^0\|_{L^1(\underline{x}(0),\overline{x}(0))}
+2M+2M'<+\infty;$
\newline
(ii)\quad$\|v(\cdot,t)\|_{L^1(\underline{x}(t),\overline{x}(t))}\leq  \|v^0\|_{L^1(\underline{x}(0),\overline{x}(0))}+2M+2M'<+\infty.$
\end{theorem}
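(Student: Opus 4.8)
The plan is to build directly on Proposition~\ref{prop. 7}, which already bounds $\|w(\cdot,t)\|_{L^1(\underline{x}(t),\overline{x}(t))}$ from above by the initial norm plus three accumulation terms involving $\delta$, $\overline{\sigma}$, and $\underline{\sigma}$ weighted by sign functions. Since the sign function takes values in $\{-1,0,1\}$, the task reduces to bounding each of those three integrals uniformly in $t$ using the new hypotheses on $\delta$, $\overline{\sigma}$, and $\underline{\sigma}$. The essential point is that the hypotheses control the integrals of the \emph{signed} quantities over arbitrary measurable sets, not merely the integral over the whole domain, which is exactly what is needed to absorb the sign-function weights.

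First I would treat the domain term $\int_0^t\int_{\underline{x}(s)}^{\overline{x}(s)}\delta(x,s)\,\text{sgn}(w(x,s))\,\diff x\diff s$. The key observation is that $\text{sgn}(w)$ partitions $\Omega_t$ into the measurable sets $\Omega^+=\{(x,s):w(x,s)>0\}$ and $\Omega^-=\{(x,s):w(x,s)<0\}$ (the zero set contributing nothing), so that
\begin{align*}
\int_0^t\int_{\underline{x}(s)}^{\overline{x}(s)}\delta\,\text{sgn}(w)\,\diff x\diff s
=\iint_{\Omega^+}\delta\,\diff x\diff s-\iint_{\Omega^-}\delta\,\diff x\diff s.
\end{align*}
Both $\Omega^+$ and $\Omega^-$ are Lebesgue measurable subsets of $\Omega_\infty$, so the hypothesis $\left|\iint_\Omega\delta\,\diff x\diff t\right|<M$ applies to each, giving an absolute bound of $2M$ for this term. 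The same device handles the two boundary terms: partition $[0,t]$ according to the sign of $w(\overline{x}(s),s)$ (respectively $w(\underline{x}(s),s)$) into measurable sets $I^\pm\subset[0,+\infty)$, and apply $\left|\int_I\overline{\sigma}(s)\,\diff s\right|<M'$ and $\left|\int_I\underline{\sigma}(s)\,\diff s\right|<M'$ to conclude that each boundary contribution is bounded in absolute value by $2M'$. Collecting the three bounds yields
\begin{align*}
\|w(\cdot,t)\|_{L^1(\underline{x}(t),\overline{x}(t))}\leq\|w^0\|_{L^1(\underline{x}(0),\overline{x}(0))}+2M+2M',
\end{align*}
which is finite and independent of $t$, establishing Claim~(i). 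Claim~(ii) follows identically from Proposition~\ref{prop. 7}(ii), the overall signs of the $\delta$ and $\sigma$ terms being irrelevant once absolute values are taken.

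I do not expect a serious analytic obstacle here, since the hard work (deriving the sign-weighted $L^1$ inequality, which requires approximating $\text{sgn}$ by smooth functions and passing to the limit) is already done in Proposition~\ref{prop. 7} via the estimate \eqref{-38}. The only point deserving care is the measurability of the level sets $\Omega^\pm$ and $I^\pm$: because $(w,v)\in C^{2,1}(\overline{\Omega}_\infty)\times C^{2,1}(\overline{\Omega}_\infty)$, the functions $w$, $w(\overline{x}(\cdot),\cdot)$, and $w(\underline{x}(\cdot),\cdot)$ are continuous, so their strict sublevel and superlevel sets are open and hence Lebesgue measurable, legitimizing the application of the hypotheses. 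The conceptual subtlety worth flagging in the exposition is \emph{why} one needs control over integrals on arbitrary measurable sets rather than just on the whole domain: as the discussion preceding the theorem makes explicit, without the sign-adapted splitting one would only recover $\int_0^t\int|\delta|$, which may diverge, so the strengthened mass-conservation hypothesis is precisely what converts the growing bound of Proposition~\ref{prop. 7} into a uniform one.
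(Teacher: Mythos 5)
Your proposal follows essentially the same route as the paper's own proof: start from the sign-weighted $L^1$ estimate of Proposition~\ref{prop. 7}, decompose the space-time domain and the boundary time sets according to the level sets of $w$ (measurable, indeed open, by continuity of $w$), and apply the hypotheses on integrals of $\delta$, $\overline{\sigma}$, $\underline{\sigma}$ over arbitrary measurable sets; this is exactly the paper's splitting into $\Omega^{\pm}$, $Q^{\pm}$, $I^{\pm}$, $J^{\pm}$ in normalized coordinates. One piece of bookkeeping, however, does not close. By your own accounting the domain term contributes at most $2M$ and \emph{each} of the two boundary terms at most $2M'$ (each sign-weighted integral splits into a positive-set piece and a negative-set piece, each bounded by $M'$), so collecting honestly yields $\|w^0\|_{L^1(\underline{x}(0),\overline{x}(0))}+2M+4M'$, not $+2M+2M'$ as you assert in your final display; the constant $2M'$ cannot be recovered from your intermediate bounds without additional information (e.g., nonnegativity of $w$, which is nowhere established). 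It is only fair to note that the paper's proof has the identical defect: its middle ``equality'' retains only $\int_{J^+}\widehat{\overline{\sigma}}(s)\,\diff s$ and $\int_{J^-}\widehat{\underline{\sigma}}(s)\,\diff s$ and silently discards the companion integrals over $\{s:\widetilde{w}(1,s)<0\}$ and $\{s:\widetilde{w}(0,s)>0\}$, which is how it lands on $2M'$ rather than $4M'$. So your argument faithfully reproduces the paper's, including its flaw; the constant genuinely delivered by this method is $2M+4M'$, which still gives the substantive conclusion of the theorem, namely that $\|w(\cdot,t)\|_{L^1(\underline{x}(t),\overline{x}(t))}$ and $\|v(\cdot,t)\|_{L^1(\underline{x}(t),\overline{x}(t))}$ are bounded uniformly in $t$.
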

\begin{proof}
For any $T\in \mathbb{R}_{+}$, we consider the closed-loop system composed of \eqref{eq: foced FPE'}, \eqref{eq: output'}, \eqref{eq: control u}, \eqref{eq: control Gamma}, \eqref{eq: error dynamics 1}, and \eqref{eq: stabilizer} over the domain $Q_T$.
For $t=0$,  we have immediately
\begin{align*}
\|w(\cdot,0)\|_{L^1(\underline{x}(0),\overline{x}(0))}= \|w^0\|_{L^1(\underline{x}(0),\overline{x}(0))}\leq \|w^0\|_1
+2{M}+2M'<+\infty.
\end{align*}
For any $t\in (0,T]$, let $Q^+=\{(z,s)\in \overline{Q}_{t};\widetilde{w}(z,s)>0\} $, $Q^-=\{(z,s)\in \overline{Q}_{t};\widetilde{w}(z,s)<0\} $, $\Omega^+=\{(x,s)\in \overline{\Omega}_{t};w(x,s)>0\} $ and $\Omega^-=\{(x,s)\in \overline{\Omega}_{t};w(x,s)<0\} $, $J^+=\{s\in [0,t];\widetilde{w}(1,s)>0\} $, $J^-=\{s\in [0,t];\widetilde{w}(1,s)<0\} $, $I^+=\{s\in [0,t];w(\overline{x}(s),s)>0\} $ and $I^-=\{s\in [0,t];w(\underline{x}(s),s)<0\} $. By Proposition~\ref{prop. 7} and the variable transformation $z=\frac{x-\underline{x}}{\Delta x}$, we have
\begin{align*}
&\|\widetilde{w}(\cdot,t)\|_{L^1(\underline{x}(t),\overline{x}(t))}\\
\leq&  \|\widetilde{w}^0\|_{L^1(\underline{x}(0),\overline{x}(0))} {+\displaystyle\int_0^t \widehat{\overline{\sigma} } (s)\text{sgn}(\widetilde{w}(1,s))\diff s} {-\displaystyle\int_0^t \widehat{\underline{\sigma}}(s)\text{sgn}(\widetilde{w}(0,s))\diff s}
+\displaystyle\int_0^t\int_{0}^{1}\widetilde{\delta}(z,s) \text{sgn}(\widetilde{w}) \diff z\diff s\\
= &\|\widetilde{w}^0\|_{L^1(\underline{x}(0),\overline{x}(0))} {+\displaystyle\int_{J^+} \widehat{\overline{\sigma} } (s)\diff s} {+\displaystyle\int_{J^-} \widehat{\underline{\sigma}}(s)\diff s}+\iint_{Q^+}\widetilde{\delta}(z,s) \diff z\diff s -\iint_{Q^-}\widetilde{\delta}(z,s)\diff z\diff s\\
\leq &\|\widetilde{w}^0\|_{L^1(\underline{x}(0),\overline{x}(0))} {+\left|\displaystyle\int_{J^+} \widehat{\overline{\sigma} } (s)\diff s\right|} {+\left|\displaystyle\int_{J^-} \widehat{\underline{\sigma}}(s)\diff s\right|}
+\left|\iint_{Q^+}\widetilde{\delta}(z,s)\diff z\diff s\right| +\left|\iint_{Q^-}\widetilde{\delta}(z,s)\diff z\diff s\right|\\
    = &\|\widetilde{w}^0\|_{L^1(\underline{x}(0),\overline{x}(0))} {+\frac{1}{\Delta x}\left|\displaystyle\int_{I^+} {\overline{\sigma} } (s)\diff s\right|} {+\frac{1}{\Delta x}\left|\displaystyle\int_{I^-} {\underline{\sigma}}(s)\diff s\right|}
+\frac{1}{\Delta x}\left|\iint_{\Omega^+}{\delta}(x,s) \diff x\diff s\right|
+\frac{1}{\Delta x}\left|\iint_{\Omega^-}{\delta}(x,s)\diff x\diff s\right|\\
\leq & \|\widetilde{w}^0\|_{L^1(\underline{x}(0),\overline{x}(0))}+\frac{2M'}{\Delta x}+\frac{2M}{\Delta x}
\notag\\<&{+\infty},
\end{align*}
which, along with the variable transformation $z=\frac{x-\underline{x}}{\Delta x}$, yields Claim~(i). Claim~(ii) can be proved in the same way.
$\hfill\Box$
\end{proof}
\section{Simulation study}\label{Sec: Simulation Study}
We illustrate the developed control scheme through a benchmark problem corresponding to a population of air-conditioners (see, e.g., \cite{Callaway:2009,Ghanavati:2018,Moura:2014,moura2013modeling,ZLZL:2019}). The simulated system contains 10,000~TCLs. The parameters of the system and the controllers are listed in Table~\ref{Tab: system parameters}. As proposed in \cite{Callaway:2009,Ghanavati:2018,Moura:2014,moura2013modeling}, the heterogeneity of this population is generated by the variation of the thermal capacitance of TCLs, which is supposed to follow a log-normal distribution in this study resulting in a difference in term of the thermal constant ($RC$) of the TCLs up to 3.5~times. Moreover, we considered in this work a case where the ambient temperature varies over time. As the measurement of the ambient temperature is available for control implementation, the actual form of the ambient temperature can be arbitrary. We used then the one shown in Fig.~\ref{Fig: Tracking_ext} in the simulation study.
\begin{table}[thpb]
\centering{{Table 1. System parameters}\label{Tab: system parameters}}\\
\begin{tabular}{|l|c|c|c|c|c|}
\hline
Parameters & Description [Unit] & Value\\
\hline
$R$ & Thermal resistance [$^\circ$C/kW] & 2\\
$C$ & Thermal capacitance [kWh/$^\circ$C] & $\sim\mathcal{N}$(10,~3)\\
$P$ & Thermal power [kW] & 14\\
$x_{ie}$ & Ambient temperature [$^\circ$C] & [28, 32]\\
$\eta$ & Load efficiency & 2.5 \\
$\beta$ & Diffusivity [$^\circ$C$^2$/h] & 0.1 \\
\hline
$\Delta x$ & Temperature deadband width [$^\circ$C] & 0.5\\
$a$ & Weighting function coefficient & $-1$ \\
$k_0$ & Control gain & 7.5\\
\hline
\end{tabular}
\end{table}
The diffusivity of the Fokker--Planck equations is taken from \cite{ZLZL:2019}, which is obtained by applying the algorithm developed in \cite{Moura:2014}. {The simulation is performed for a period of 24~hours.} The results related to power consumptions are all presented in quantities normalized by the maximal total power consumption of the population. The deadband width is set to $\Delta x = 0.5$~$^\circ$C. Initially, the temperatures of the TCLs are uniformly distributed over [20$-\Delta x /2$, 20$+\Delta x /2$]($^\circ$C), and their operational state is randomly generated with approximately 50\% in ON-state and 50\% in OFF-state.
\begin{figure}[htpb]
  \centering
  \includegraphics[scale=0.223]{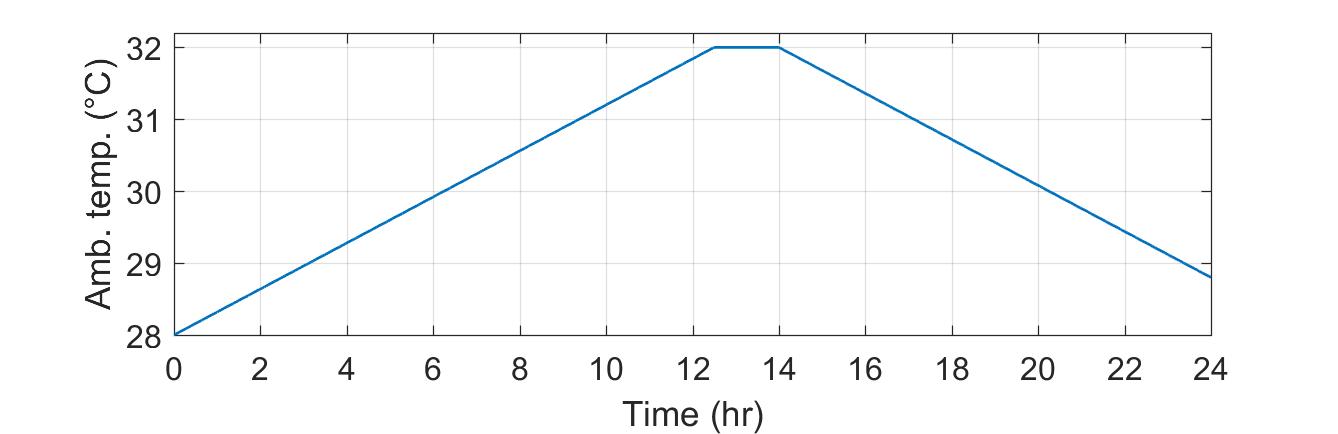}
  \caption{Ambient temperature.}\label{Fig: Tracking_ext}
\end{figure}

In the simulation, the forced switchings are randomly generated independently at the level of each TCL. Moreover, we note that by rewriting the weighting function in the output defined in \eqref{eq: output} as $ax+b = a(x-x_p)$, then $x_p$ can be interpreted as the set-point. For this reason, in aggregate power tracking control we can fix $y_d$ while varying $x_p$, which can be determined by the desired total power consumption of the population (see, e.g., \cite{Radaideh2019}). Furthermore, to avoid power consumption oscillations du to fast step set-point variations (\cite{Callaway:2009,Ihara1981,laparra2019,Radaideh2019,Totu:2017}), we consider a tracking control scheme where the reference trajectory, $x_p$, a smooth functions connecting the initial point at time $t_i$ to a desired point at time $t_f$. We choose then a polynomial of the following form taken from \cite{Levine:2009}:
\begin{equation}\label{eq: ref}
  x_p(t) = x_p(t_i) + \left(x_p(t_f)-x_p(t_i)\right)\tau^5(t)\sum_{l=0}^{4}a_l\tau^l(t), \, t \in [t_i, t_f],
\end{equation}
where $\tau(t)=(t-t_i)/(t_f - t_i)$. For a set-point control, the coefficients in \eqref{eq: ref} can be determined by imposing the initial and final conditions:
$
  \dot{x}_p(t_i)= \dot{x}_p(t_f)= \ddot{x}_p(t_i)= \ddot{x}_p(t_f) = \dddot{x}_p(t_i)= \dddot{x}_p(t_f) = 0,
$
which yields $a_0 = 126$, $a_1 = ?420$, $a_2 = 540$, $a_3 = ?315$, and $a_4 = 70$. The reference temperature trajectory used in the simulation is shown in Fig.~\ref{Fig: references}.
\begin{figure}[htpb]
  \centering
  \includegraphics[scale=0.225]{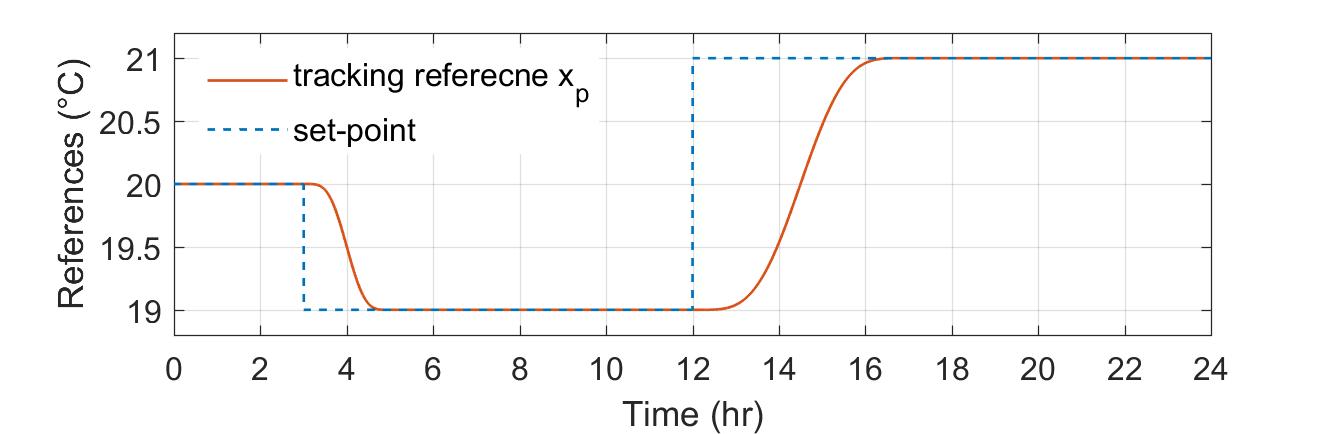}
  \caption{Step set-point variations and smooth reference trajectory.
           }\label{Fig: references}
\end{figure}

The control signal is depicted in Fig.~\ref{Fig: Tracking_cntr}. Figure~\ref{Fig: Tracking_ref} shows the desired temperature trajectory $x_p$ and the one generated from the PDE control signal as
\begin{equation*}
  x_{\text{ref}}(t) = x_{\text{ref}}(0)+\int_{0}^{t}\dot{x}_{\text{ref}}(\tau)\diff \tau =  x_{\text{ref}}(0)+\int_{0}^{t}u(\tau)\diff\tau.
\end{equation*}
Note that in order to avoid the chattering induced by fast variations of $u(t)$, the reference sent to the TCLs is the average of the values over the last 10~iterations. Figure~\ref{Fig: Tracking_temp} illustrates the temperature evolution of 200 randomly selected TCLs. It can be seen that all the TCLs follow well the reference while respecting the temperature deadband. The variation of the aggregate power consumption of the whole population is shown in Fig.~\ref{Fig: Tracking_power}. The results show that the developed control scheme tracks well the desired reference with quite import variations.

\begin{figure}[htpb]
  \centering
  \subfigure[]{\label{Fig: Tracking_cntr}\includegraphics[scale=0.225]{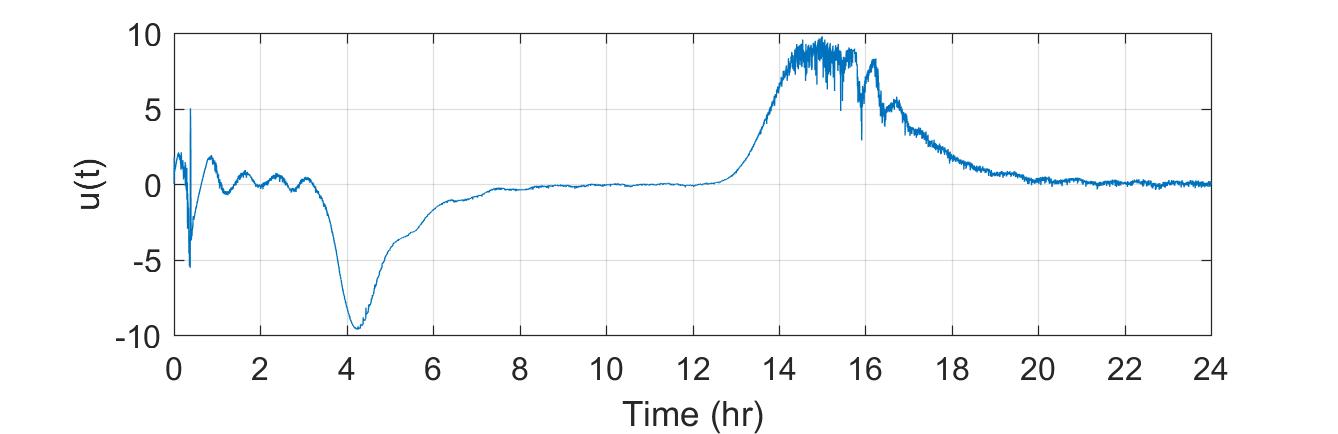}}
  \subfigure[]{\label{Fig: Tracking_ref}\includegraphics[scale=0.225]{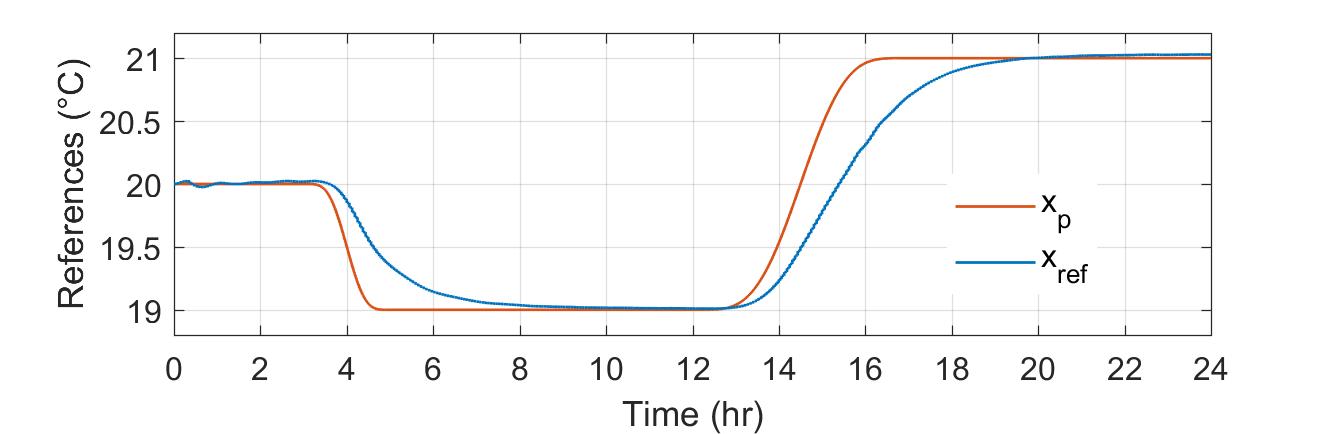}}
  \subfigure[]{\label{Fig: Tracking_temp}\includegraphics[scale=0.225]{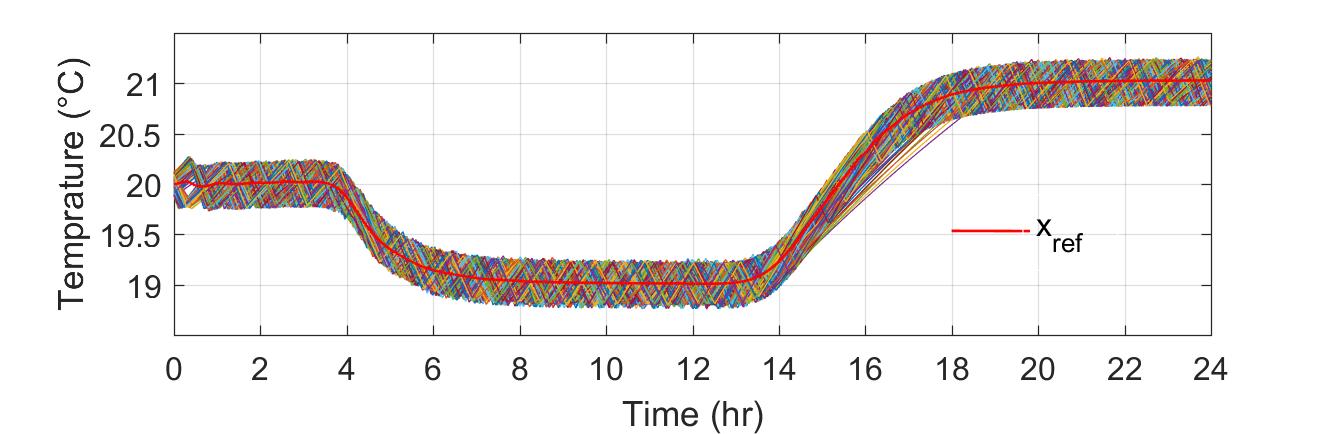}}
  \subfigure[]{\label{Fig: Tracking_power}\includegraphics[scale=0.225]{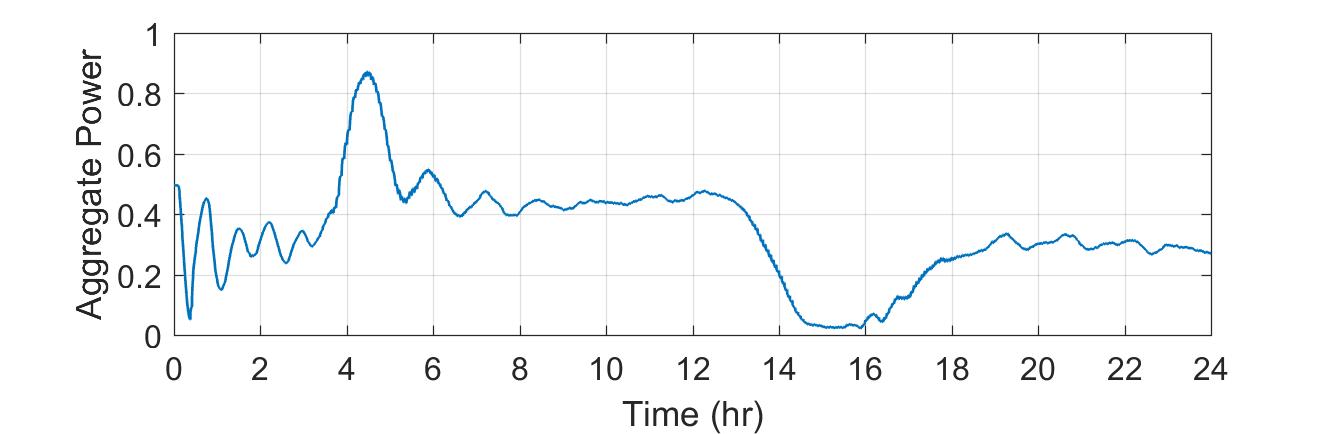}}
  \caption{Dynamics of the TCL population:
           (a) PDE control signal;
           (b) desired and generated references;
           (c) temperature evolutions;
           (d) aggregate power consumptions.
           }
\end{figure}

It is worth noting that since the feedback linearization--based control can cope with parameter variations in a straightforward manner, {there is no any concern about the simultaneous variation of set-point and ambient temperature in the considered setting, which is indeed hard to handle with most of the methods reported in the existing literature.} Moreover, the purpose of the simulation study presented in this work is to illustrate the essential behaviours of the developed control algorithm, and the emphasis is not put on the performance, although it can be improved by a finer tuning.

\section{Concluding remarks}\label{Sec: Concluding Remarks}
We have developed in the present work an input-output linearization--based scheme for aggregate power tracking control of heterogeneous populations of  thermostatically controlled load (TCLs) governed by a pair of coupled Fokker--Planck equations. As the thermostat--based deadband control is used in the operation of individual TCLs, the considered problem involves moving boundaries varying with the reference signals. It should be noted that a more generic setting is the case where the upper and lower bounds of the thermostat may vary with different rates. As the theoretical development, in particular the well-posedness assessment, of this type of systems will involve a very heavy mathematical analysis, it is beyond the scope of this work and will be addressed in a separate work. Other subjects, such as the control of TCL populations with the dynamics of the TCLs described by second-order dynamical models in the framework of partial differential equations, will also be considered in our future work.

\section{Appendix}
\subsection{Notations on function spaces}\label{Appendix: notation}
Let $D$ be {an open or closed} domain in $\mathbb{R}^i(i=1,2)$. $C^j(D) (j=1,2,...)$ consists of all continuous functions $u$ having continuous derivatives over $D$ up to order $j$ inclusively. $C^\infty(D)$ consists of all functions $u$ belonging to $ C^j(D)$ for all $j=1,2,...$. $C_0^\infty(D)$ consists of all $C^\infty$-functions defined on $D$ and having a compact support in $D$.

For $T\in(0,+\infty]$ and $a,b\in\mathbb{R} $ with $a<b$, let $ Q=(a,b)\times (0,T)$. $C^{2,1}(\overline{Q})$ consists of all continuous functions $u(x,t)$ having continuous derivatives $u_x,u_{xx},u_t$ over $\overline{Q}$. For a nonnegative integer $m$, we also use $D^m_xu$ to denote the $m$th-order derivative of a function $u$ w.r.t. its argument $x$. Let $l$ be a nonintegral positive number. $\mathcal{H}^{l,l/2}(\overline{Q})$ denotes the Banach space of functions $u(x,t)$ that are continuous over $\overline{Q}$, together with all derivatives of the form $D^r_tD^s_x$ for $2r+s<l$, and have a finite norm
    $|u|^{(l)}_{Q}:= \langle u\rangle^{(l)}_{Q}+\sum_{j=0}^{[l]}\langle u\rangle^{(j)}_{Q}$, 
  where
\begin{align}
    \langle u\rangle^{(0)}_{Q}:=&|u|^{(0)}_{Q}:=\max_{\overline{Q}}|u|,
    \langle u\rangle^{(j)}_{Q}:= \sum\limits _{(2r+s=j)}|D^r_tD^s_xu|^{(0)}_{Q},
    \langle u\rangle^{(l)}_{Q}:=\langle u\rangle^{(l)}_{x,Q}+\langle u\rangle^{(l/2)}_{t,Q}, \notag\\
    \langle u\rangle^{(l)}_{x,Q}:=& \sum\limits _{(2r+s=[l])}\langle D^r_tD^s_xu\rangle^{(l-[l])}_{x,Q},
    \langle u\rangle^{(l/2)}_{t,Q}:=  \sum\limits _{0<l-2r-s<2}\langle D^r_tD^s_xu\rangle^{(\frac{l-2r-s}{2})}_{t,Q}, \notag\\
    \langle u\rangle^{(\tau)}_{t,Q}:=&\sup_{(x,t),(x,t')\in \overline{Q},|t-t'|\leq \rho_0}\frac{|u(x,t)-u(x,t')|}{|t-t'|^{\tau}},\ 0<\tau <1,\notag\\
     \langle u\rangle^{(\tau)}_{x,Q}:=&\sup_{t\in (0,T)}\sup \rho^{-\tau}\text{osc}_x\left\{u(x,t);\Omega_\rho^i\right\},\ 0<\tau <1.\notag
\end{align}
In the last line, the second supremum is taken over all connected components $ \Omega_\rho^i$ of all $\Omega_\rho$ with $\rho{}{\leq} \rho_0$, while $\text{osc}_x\{u(x,t);\Omega_\rho^i\}$ is the oscillation of $u(x,t)$ on $\Omega_\rho^i$, i.e., $\text{osc}_x\{u(x,t);\Omega_\rho^i\} =\{\text{vrai}\sup_{x\in \Omega_\rho^i}u(x,t)-\text{vrai}\inf_{x\in \Omega_\rho^i}u(x,t)\}$, {}{$\rho_0=b-a$}, $\Omega_\rho=K_\rho\cap (a,b) $, and $K_\rho$ is an arbitrary open interval in $(-\infty,+\infty)$ of length $2\rho$.

 For $i=1,2$, $L^i(a,b)$ consists of all measurable functions $u$ defined over $(a,b)$ with $\|u\|_{L^i(a,b)}:=\left(\int_a^b|u(x)|^i\diff x\right)^{\frac{1}{i}}<+\infty$.
{$L^{1}(Q)$ consists of all measurable functions defined over $Q$ with $
\|u\|_{L^1(Q)}:=\int_0^T\int_a^b|u(x,t)|\diff x\diff t<+\infty
$. }
  $L^{\infty}(Q)$ consists of all measurable functions defined over $Q$ with $
\|f\|_{L^{\infty}(Q)}
:=\text{vrai}\sup_{(x,t)\in Q}|f(x,t)|<+\infty.
$

 Without a confusion, we denote $\|\cdot\|_{L^\infty(Q_T)}$ ( or $\|\cdot\|_{L^\infty(0,T)}$, or $\|\cdot\|_{L^\infty(0,1)}$) and $\|\cdot\|_{L^i(0,1)}$ ($i=1,2$)  by $\|\cdot\|_\infty$ and $\|\cdot\|_{i}$, respectively.

\section{Proof of Proposition \ref{well-posedness}\label{Appendix: well-posedness proof}}

\textbf{Step 1:} We establish the existence of a (classical) solution for approximating equations. For any {$T\in \mathbb{R}_{+}$}, we consider two sequences of parabolic equations over $Q_T$:
\begin{subequations}\label{+35}
\begin{align}
 \widetilde{w}_{nt} =  \widehat{\beta} \widetilde{w}_{nzz} -\left((\widehat{\alpha}_{1}-2G_{\widetilde{w}_{n-1}})\widetilde{w}_n\right)_z
   +\widetilde{\delta}_{n} ,  ~&(z,t)\in Q_T, \label{+35a}\\
    \widehat{\beta} \widetilde{w}_{nz}(1,t)-(\widehat{\alpha}_1(1)-2G_{\widetilde{w}_{n-1}})\widetilde{w}_n(1,t)= {\widehat{\overline{\sigma}}_n(t)}, ~&t\in [0,T], \label{+35b}\\
    \widehat{\beta }\widetilde{w}_{nz}(0,t)-(\widehat{\alpha}_1(0)-2G_{\widetilde{w}_{n-1}})\widetilde{w}_n(0,t)= {\widehat{\underline{\sigma}}_n(t)}, ~&t\in  [0,T], \label{+35c}\\
     \widetilde{w}_{n}(z,0)=\widetilde{w}^0(z),~&z\in(0,1), \label{+35d}
\end{align}
\end{subequations}
\vspace{-18pt}
\begin{subequations}\label{+v}
\begin{align}
  \widetilde{v}_{nt}=                                          \widehat{\beta} \widetilde{v}_{nzz}-\left((\widehat{\alpha}_{0}-2G_{\widetilde{w}_{n}})\widetilde{v}_{n}\right)_z
                                         -\widetilde{\delta}_{n} , ~&(z,t)\in Q_T, \label{+35a'}\\
                                         \widehat{\beta} \widetilde{v}_{nz}(1,t)-(\widehat{\alpha}_0(1)-2G_{\widetilde{w}_{n}}(t))\widetilde{v}_{n}(1,t)=- {\widehat{\overline{\sigma}}_n(t)}, ~&t\in  [0,T],\label{+35b'}\\
       \widehat{\beta} \widetilde{v}_{nz}(0,t)-(\widehat{\alpha}_0(0)-2G_{\widetilde{w}_{n}}(t))\widetilde{v}_{n}(0,t)=- {\widehat{\underline{\sigma}}_n(t)} ~&t\in  [0,T],\label{+35c'}\\
       \widetilde{v}_{n}(z,0)=\widetilde{v}^0(z),~&z\in(0,1),\label{+35d'}
  \end{align}
\end{subequations}
where
\begin{align*}
G_{\widetilde{w}_{n}}(t)=&-\dfrac{\displaystyle \widehat{\beta} (\widetilde{w}_{n}(1,t)-\widetilde{w}_{n}(0,t))+\frac{\eta}{{\widehat{a}}P}({k_0}y_d+\dot{y}_d)}{ {\displaystyle\int_{0}^{1}\widetilde{w}_n \diff z}}+\dfrac{\displaystyle\int_{0}^{1}{ {\left(k_0z+\frac{k_0\widehat{b}}{\widehat{a}}+\widehat{\alpha}_1+\Delta x\dot{b}\right)}}\widetilde{w}_{n} \diff z}{{ \displaystyle\int_{0}^{1}\widetilde{w}_{n} \diff z}},\forall n\geq 0,
\end{align*}
with $
  \widetilde{w}_0(z,t):= \widetilde{w}^0(z)$, ${\frac{\widehat{b}}{\widehat{a}}=\frac{1}{\Delta x}\left(\underline{x}+\frac{b}{a}\right)}$, $ \{\widetilde{\delta}_n\}$ is a sequence of functions on $\overline{Q}_{T}$ and {$ \{ \widehat{\overline{\sigma}}_n\},\{\widehat{\underline{\sigma}}_n\}$ are sequences of functions on $[0,T]$} satisfying:
\begin{enumerate}
\item[$\bullet$] $  \widehat{\overline{\sigma}}_n,\widehat{\underline{\sigma}}_n\in C^1( [0,T])$, $\widehat{\overline{\sigma}}_n(0)=\widehat{\overline{\sigma}}(0)$, $\widehat{\underline{\sigma}}_n(0)=\widehat{\underline{\sigma}}(0),\forall n\geq 1$;
  \item[$\bullet$] $\| \widehat{\overline{\sigma}}_n\|_{L^\infty(0,T)}\leq \widehat{\overline{\sigma}}_0, \| \widehat{\underline{\sigma}}_n\|_{L^\infty(0,T)}\leq \widehat{\underline{\sigma}}_0,\forall n\geq 1$, where $\widehat{\overline{\sigma}}_0,\widehat{\underline{\sigma}}_0$ are positive constants;
\item[$\bullet$] {$ \widehat{\underline{\sigma}}_n\rightarrow \widehat{\overline{\sigma}}$ and $ \widehat{\underline{\sigma}}_n\rightarrow \widehat{\underline{\sigma}}$ in $L^1(0,T)$, as $n\rightarrow \infty$};
  \item[$\bullet$] $\widetilde{\delta}_{n}$ is H\"{o}lder continuous in $z$ with exponent $\theta$ and $C^1$-continuous in $t$ over $\overline{Q}_T$;
    \item[$\bullet$] $\|\widetilde{\delta}_n\|_{L^\infty(Q_T)}\leq \widehat{\delta}_0,\forall n\geq 1$, where $\widehat{\delta}_0$ is a positive constant;
    \item[$\bullet$] {$\widetilde{\delta}_{n}\rightarrow \widetilde{\delta}$  in $L^1(Q_T)$}, as $n\rightarrow \infty$;
    \item[$\bullet$]  $|\int_0^1\widetilde{w}^0(z)\diff z +\int_{0}^t\int_0^1\widetilde{\delta}_n(z,s)\diff z\diff s+ \int_{0}^t(\widehat{\overline{\sigma}}_n (s)-\widehat{\underline{\sigma}}_n (s))\diff s|\geq \frac{\widetilde{\delta}_0}{4}>0,\forall t\in  [0,T]$, $\forall n\geq 1$, where $ \widetilde{\delta}_0:=\frac{{\delta}_0}{\Delta x}$.
\end{enumerate}

We re-write \eqref{+35} as below:
 \begin{subequations}\label{+35'}
\begin{align}
  \widetilde{ w}_{nt}(z,t)-\widehat{\beta} \widetilde{ w}_{nzz} + b_n(z,t,\widetilde{ w}_n,\widetilde{ w}_{nz})=0,  ~&(z,t)\in Q_T, \label{+35a'}\\
  \widehat{\beta} \widetilde{ w}_{nz}(1,t)+\psi_n(1,t,\widetilde{ w}_n)=0~&t\in  [0,T],\label{+35b'}\\
  \widehat{\beta} \widetilde{ w}_{nz}(0,t)-\psi_n(0,t,\widetilde{ w}_n)=0,~&t\in  [0,T],\label{+35c'}\\
                                         \widetilde{ w}_{n}(z,0)=\widetilde{w}^0(z),~&z\in (0,1),\label{+35d'}
\end{align}
\end{subequations}
where
$b_n(z,t,\widetilde{ w},p)=\widehat{\alpha}_{1z}\widetilde{ w}+(\widehat{\alpha}_1-G_{\widetilde{ w}_{n-1}}(t))p-\widetilde{\delta}_n(z,t)$ and $ \psi_n(z,t,\widetilde{ w})=(1-2z)(\widehat{\alpha}_1-G_{\widetilde{ w}_{n-1}}(t))\widetilde{w}{-z\widehat{\overline{\sigma}}_n(t)+(1-z)\widehat{\underline{\sigma}}_n(t)}$.

We  {show that \eqref{+35'} has a unique (classical) solution $ \widetilde{w}_{n}\in \mathcal {H}^{2+\theta,1+\frac{\theta}{2}}(\overline{Q}_{T})$ satisfying
\begin{align}\label{+34'''}
\max_{\overline{Q}_{T}}|\widetilde{w}|+\max_{{}{\overline{Q}_{T}}}|\widetilde{w}_{z}|+|\widetilde{w}|^{(2+\theta)}_{Q_{T}}\leq \widetilde{C}_{1n},
\end{align}
{where $\widetilde{C}_{1n}$ is a positive constant depending only on $\widehat{\beta}$, $P$, $\eta$, $k_0$, $\Delta x$, $\widehat{a}$, $\delta_0$, $\widehat{\delta}_0$, $\|y_d\|_\infty$, $\|\dot{y}_d\|_\infty$, $\|\underline{x}\|_\infty$, $\|\widehat{\alpha}_1\|_\infty$, $\|\widehat{\alpha}_{1z}\|_\infty$,  $\widehat{\overline{\sigma}}_0$, $ \widehat{\underline{\sigma}}_0$, $\|\widetilde{w}^0\|_\infty$, $\big|\int_{0}^{1}\widetilde{w}^{0}\diff z\big|$}, $|\widetilde{w}^0|^{(2)}_{(0,1)}$, $|\widetilde{w}^0|^{(2)}_{(0,1)}$ and $n$.

 Indeed, we proceed by induction. For $n=1$, by virtue of \textbf{(A1)}, \textbf{(A2)}, \textbf{(A3)} and \eqref{eq: ON on 0}, there exists a constant $g_0>0$ {depending only on $\widehat{\beta},P,\eta,k_0,\Delta x,\widehat{a}$, $\|y_d\|_\infty$, $\|\dot{y}_d\|_\infty$, $\|\underline{x}\|_\infty$, $\|\widehat{\alpha}_1\|_\infty$, $\|\widehat{\alpha}_{1z}\|_\infty,\|\widetilde{w}^0\|_\infty$ and $\big|\int_{0}^{1}\widetilde{w}^{0}\diff z\big|$} such that $|G_{\widetilde{w}^0}(t)|\leq g_0$ for all $t\in[0,T]$. Then for any $(z,t,\widetilde{w},p)\in[0,1]\times[0,T]\times\mathbb{R}\times\mathbb{R}$, we infer from Young's inequality that
\begin{align*}
-\widetilde{w}{b_1}(z,t,\widetilde{w},p)\leq &\bigg( \|\widehat{\alpha}_{1z}\|_{\infty}+\frac{\|\widehat{\alpha}_1\|_{\infty}+g_0}{2} \bigg)\widetilde{w}^2+\frac{\|\widehat{\alpha}_1\|_{\infty}+g_0}{2}p^2+\frac{\widehat{\delta}_0^2}{2}\notag\\
:=&C_{1,0}p^2+C_{1,1}\widetilde{w}^2+C_{1,2},\\
-\widetilde{w}{\psi_1}(z,t,\widetilde{w},p)\leq& C_{1,3}\widetilde{w}^2+C_{1,4},
\end{align*}
where $C_{1,3},C_{1,4}$ are positive constants depending only on  $ g_0,\|\widehat{\alpha}_1\|_{\infty},\widehat{\overline{\sigma}}_0, \widehat{\underline{\sigma}}_0$.

By the compatibility condition \eqref{09021} and the definition of $\widetilde{w}^0$, it follows that
\begin{align}\label{-29}
  \widehat{\beta} \widetilde{w}^0_{z}(1)+\psi_1(1,0,\widetilde{w}^0(1))=
  \widehat{\beta} \widetilde{w}^0_{z}(0)-\psi_1(0,0,\widetilde{w}^0(0))=0.
\end{align}
{Then all the structural conditions of \cite[Theorem 7.4, Chap. V]{Ladyzenskaja:1968} are fulfilled. Therefore,} \eqref{+35'} has a unique (classical) solution $\widetilde{w}_1\in \mathcal {H}^{2+\theta,1+\frac{\theta}{2}}(\overline{Q}_{T})$. Moreover, by \cite[Theorem 7.3, 7.2, Chap. V]{Ladyzenskaja:1968}, we have
\begin{align*}
\max_{\overline{Q}_{T}}|\widetilde{w}_1|\leq& \lambda_{1,1}\e^{\lambda_1T}\max\{\sqrt{C_{1,2}}, \sqrt{C_{1,4}}, \widetilde{w}^0(1), \widetilde{w}^0(0)\}:=m_1,
\max_{\overline{Q}_{T}}|\widetilde{w}_{1z}|\leq M_{1},
\end{align*}
{where $ \lambda_{1,1},\lambda_1$ depend only on $\widehat{\beta}, C_{1,0}, C_{1,1}$, and $C_{1,3}$, $ M_{1}$ depends only on $ \widehat{\beta}$, $m_1$, $g_0$, $\|\widehat{\alpha}_1\|_\infty$, $\|\widehat{\alpha}_{1z}\|_\infty$ and $|\widetilde{w}^0|^{(2)}_{(0,1)}$.
Furthermore, by \cite[Theorem 5.4, Chap. V]{Ladyzenskaja:1968}, we have
$|\widetilde{w}_1|^{(2+\theta)}_{Q_{T}}\leq \overline{M}_1$,
 where $ \overline{M}_1$ depends only on $ M_1$, $m_1$, $\theta$, $\widehat{\beta}$, $\widehat{\delta}_0$ and $|\widetilde{w}^0|^{(2)}_{(0,1)}$.}

Note that
\begin{subequations}\label{-32}
\begin{align}
    \widehat{\beta} \widetilde{w}_{1z}(1,0)+\psi_1(1,0,\widetilde{w}_1(1,0))=
  \widehat{\beta} \widetilde{w}_{1z}(0,0)-\psi_1(0,0,\widetilde{w}_1(0,0))=0,\\
  \widetilde{w}_{1z}(1,0)=\widetilde{w}^0_{z}(1),\widetilde{w}_{1z}(0,0)=\widetilde{w}^0_{z}(0),
  \widehat{\overline{\sigma}}_1(0)=\widehat{\overline{\sigma}}(0),\widehat{\underline{\sigma}}_1(0)=\widehat{\underline{\sigma}}(0).
\end{align}
\end{subequations}
 By the definition of $ \psi_1$,  \eqref{-29} and \eqref{-32}, we have $G_{\widetilde{ w}_{1}}(0)=G_{\widetilde{ w}_{0}}(0)$, which implies that
 \begin{align*}
   \widehat{\beta} \widetilde{w}^0_{z}(1)-(\widehat{\alpha}_1(1)-G_{\widetilde{ w}_{1}}(0))\widetilde{w}^0(1)+\widehat{\overline{\sigma}}_1(0)=0,
        \widehat{\beta} \widetilde{w}^0_{z}(0)-(\widehat{\alpha}_1(0)-G_{\widetilde{ w}_{1}}(0))\widetilde{w}^0(0)-\widehat{\underline{\sigma}}_1(0)& =0.
  \end{align*}

   {Let $k>1$ be an integer. Assuming that for $n=k$, \eqref{+35'} has a unique (classical) solution $ \widetilde{w}_{k}\in \mathcal {H}^{2+\theta,1+\frac{\theta}{2}}(\overline{Q}_{T})$ satisfying \eqref{+34'''} and the following equalities:
    \begin{align}\label{-33}
   \widehat{\beta} \widetilde{w}^0_{z}(1)-(\widehat{\alpha}_1(1)-G_{\widetilde{ w}_{k}}(0))\widetilde{w}^0(1)+\widehat{\overline{\sigma}}_k(0)=0,
        \widehat{\beta} \widetilde{w}^0_{z}(0)-(\widehat{\alpha}_1(0)-G_{\widetilde{ w}_{k}}(0))\widetilde{w}^0(0)-\widehat{\underline{\sigma}}_k(0) =0,
  \end{align}

we need to prove the existence of a (classical) solution in the case that $n=k+1$.}

Noting that by \eqref{+35}, for all $t\in [0,T]$, it follows that
\begin{align}\label{-34}
 \int_{0}^{1} \widetilde{w}_k(z,t)\diff z = \int_{0}^{1} \widetilde{w}^0(z)\diff z + \int_{0}^t\int_{0}^{1} \widetilde{\delta}_k(z,s)\diff z\diff s+ \int_{0}^t\left(\widehat{\overline{\sigma}}_k (s)-\widehat{\underline{\sigma}}_k (s)\right)\diff s.
\end{align}
By the definition of ${G_{w_{k}}}(t)$, \eqref{+28},  \eqref{+34'''} and the choice of $ \widetilde{\delta}_n, \widehat{\overline{\sigma}}_n,\widehat{\underline{\sigma}}_n$, we have
$|{G_{\widetilde{w}_k}}(t)|\leq {g_k}$ for all $t\in [0,T]$,
where {$g_k$} is a positive constant depending only on $\widehat{\beta}$, $P$, $\eta,k_0$, $\Delta x$, $\widehat{a}$,  $\delta_0$, $\|y_d\|_\infty$, $\|\dot{y}_d\|_\infty$, $\|\underline{x}\|_\infty$, $\|\widehat{\alpha}_1\|_\infty$, $\|\widehat{\alpha}_{1z}\|_\infty$, $\|\widetilde{w}^0\|_\infty$ and $\|\widetilde{w}_k\|_\infty$.
{
Then for any $(z,t,\widetilde{w},p)\in[0,1]\times[0,T]\times\mathbb{R}\times\mathbb{R}$, we obtain
\begin{subequations}\label{c-n}
\begin{align}
-\widetilde{w}{b_{k+1}}(z,t,\widetilde{w},p)\leq &C_{k+1,0}p^2+C_{k+1,1}\widetilde{w}^2+C_{k+1,2},\\
-\widetilde{w}{\psi_{k+1}}(z,t,\widetilde{w},p)\leq &C_{k+1,3}\widetilde{w}^2+C_{k+1,4},
\end{align}
 \end{subequations}
where $C_{k+1,0},C_{k+1,1},C_{k+1,2},C_{k+1,3},C_{k+1,4}$ are positive constants depending only on $g_k$, $\|\widehat{\alpha}_1\|_{\infty}$, $\|\widehat{\alpha}_{1z}\|_{\infty}$, $\widehat{\overline{\sigma}}_0$, $\widehat{\underline{\sigma}}_0$, $\widehat{\delta}_0$.

 Noting the fact that $\widehat{\overline{\sigma}}_{k+1}(0)=\widehat{\overline{\sigma}}_{k}(0)=\widehat{\overline{\sigma}}(0),\widehat{\underline{\sigma}}_{k+1}(0)=\widehat{\underline{\sigma}}_{k}(0)=\widehat{\underline{\sigma}}(0)$ and by \eqref{-33}, we find that the following compatibility conditions hold true:
   \begin{align*}
   \widehat{\beta} \widetilde{w}^0_{z}(1)-(\widehat{\alpha}_1(1)-G_{\widetilde{ w}_{k}}(0))\widetilde{w}^0(1)+\widehat{\overline{\sigma}}_{k+1}(0)=0,
        \widehat{\beta} \widetilde{w}^0_{z}(0)-(\widehat{\alpha}_1(0)-G_{\widetilde{ w}_{k}}(0))\widetilde{w}^0(0)-\widehat{\underline{\sigma}}_{k+1}(0) =0.
  \end{align*}
{Then, by \cite[Theorem 7.4, 7.3, 7.2, Theorem 5.4, Chap. V]{Ladyzenskaja:1968},} we conclude that \eqref{+35'} has a unique solution $ \widetilde{w}_{k+1}\in \mathcal {H}^{2+\theta,1+\frac{\theta}{2}}(\overline{Q}_{T})$ when {$n=k+1$}, having the estimate {in \eqref{+34'''}. Therefore, \eqref{+35} has a unique (classical) solution $ \widetilde{w}_{n}\in \mathcal {H}^{2+\theta,1+\frac{\theta}{2}}(\overline{Q}_{T})$ for every {$n\geq 1$}}.

{Since for any $n$ there exists $ \widetilde{w}_{n}$ satisfying \eqref{+35},} $G_{ \widetilde{w}_{n}}(t)$ is fixed for any fixed $n$. Thus, the existence of a unique (classical) solution $\widetilde{v}_n\in \mathcal {H}^{2+\theta,1+\frac{\theta}{2}}(\overline{Q}_{T})$ to \eqref{+v} is guaranteed by \cite[Theorem 7.4, Chap. V]{Ladyzenskaja:1968}. Moreover, $\widetilde{v}_n$ has a similar estimate as \eqref{+34'''}:
%
%
%
%
%
%
    \begin{align}\label{+122101}
\max_{\overline{Q}_{T}}|\widetilde{v}|+\max_{{}{\overline{Q}_{T}}}|\widetilde{v}_{z}|+|\widetilde{v}|^{(2+\theta)}_{Q_{T}}\leq \widetilde{C}_{2n},
\end{align}
where $\widetilde{C}_{2n}$ is a positive constant which may depend on $n$.

\textbf{Step 2:} We establish uniform estimates of $\widetilde{w}_n$ and $\widetilde{v}_{n}$ in $\mathcal {H}^{2+\theta,1+\frac{\theta}{2}}(\overline{Q}_{T})$, i.e., we prove that $\widetilde{C}_{1n}$ and $\widetilde{C}_{2n}$ are independent of $n$.
First, for any $n$ and any $t\in [0,T]$, we prove the following $L^1$-estimates:
\begin{subequations}\label{-38}
\begin{align}
\|\widetilde{w}_n(\cdot,t)\|_1  \leq&  \|\widetilde{w}^0\|_1 {+\displaystyle\int_0^t \widehat{\overline{\sigma} }_n (s)\text{sgn}(\widetilde{w}_n(1,s))\diff s}- {\displaystyle\int_0^t \widehat{\underline{\sigma}}_n(s)\text{sgn}(\widetilde{w}_n(0,s))\diff s}
+\displaystyle\int_0^t\int_{0}^{1}\widetilde{\delta}_n(z,s) \text{sgn}(\widetilde{w}_n(z,s)) \diff z\diff s,\label{-38a}\\
\|\widetilde{v}_n(\cdot,t)\|_1  \leq&  \|\widetilde{v}^0\|_1
{-\displaystyle\int_0^t \widehat{\overline{\sigma} }_n (s)\text{sgn}(\widetilde{v}_n(1,s))\diff s}
{+\displaystyle\int_0^t \widehat{\underline{\sigma}}_n(s)\text{sgn}(\widetilde{v}_n(0,s))\diff s}
-\displaystyle\int_0^t\int_{0}^{1}\widetilde{\delta}_n(z,s) \text{sgn}(\widetilde{v}_n(z,s)) \diff z\diff s.\label{-38b}
\end{align}
\end{subequations}
Indeed, for any $\varepsilon>0$, let
\begin{equation*}
\rho_\varepsilon(r):=\left\{\begin{aligned}
& |r|,\  |r|\geq \varepsilon\\
& -\frac{r^4}{8\varepsilon^3}+\frac{3r^2}{4\varepsilon}+\frac{3\varepsilon}{8},\  |r|< \varepsilon
\end{aligned}\right.,
\end{equation*}
which is $C^2$-continuous in $r$ and satisfies $\rho_\varepsilon(r)\geq |r|$, $|\rho_\varepsilon'(r)|\leq 1$ and $\rho_\varepsilon''(r)\geq 0$.
Multiplying $\rho_\varepsilon'(\widetilde{w}_n) $ to \eqref{+35}, and integrating by parts, we have
\begin{align}\label{-15}
\frac{\text{d}}{\text{d} t}\int_0^1\rho_\varepsilon(\widetilde{w}_n)\diff z=&-\widehat{\beta } \|\widetilde{w}_{nz}\sqrt{\rho_\varepsilon''(\widetilde{w}_n)}\|^2+\int_0^1 \left(\widehat{\alpha}_{1}-2G_{\widetilde{w}_{n-1}}\right)\rho_\varepsilon''(\widetilde{w}_n)\widetilde{w}_n\widetilde{w}_{nz}\diff z
+\int_0^1 \widetilde{\delta}_n\rho_\varepsilon'(\widetilde{w}_n)\diff z\notag\\
&
+\widehat{\overline{\sigma}}_n(t)\rho_\varepsilon'(\widetilde{w}_n(1,t))
-\widehat{\underline{\sigma}}_n(t)\rho_\varepsilon'(\widetilde{w}_n(0,t)).
\end{align}
Noting that
\begin{align*}
\|\widetilde{w}_n\sqrt{\rho_\varepsilon''(\widetilde{w}_n)}\|^2
=&\int_{0}^{1}\widetilde{w}_n^2\rho_\varepsilon''(\widetilde{w}_n)\chi_{\{|\widetilde{w}_n|>\varepsilon\}}(z)\diff z+ \int_{0}^{1}\widetilde{w}_n^2\rho_\varepsilon''(\widetilde{w}_n)\chi_{\{|\widetilde{w}_n|\leq\varepsilon\}}(z)\diff z\notag\\
=&{\int_{0}^{1}\widetilde{w}_n^2\rho_\varepsilon''(\widetilde{w}_n)\chi_{\{|\widetilde{w}_n|\leq\varepsilon\}}(z)\diff z}\notag\\
=&\int_{0}^{1}\widetilde{w}_n^2\frac{3(\varepsilon^2-\widetilde{w}_n^2)}{2\varepsilon^3}\chi_{\{|\widetilde{w}_n|\leq\varepsilon\}}(z)\diff z \notag\\
\leq & \frac{3}{2}\varepsilon,
\end{align*}
it follows that
\begin{align}\label{-16}
\int_{0}^{1}\left(\widehat{\alpha}_{1}-2G_{\widetilde{w}_{n-1}}\right)\rho_\varepsilon''(\widetilde{w}_n)\widetilde{w}_n\widetilde{w}_{z}\diff z
\leq & \frac{1}{4\widehat{\beta}} \|\left(\widehat{\alpha}_{1}-2G_{\widetilde{w}_{n-1}}\right)\widetilde{w}_n\sqrt{\rho_\varepsilon''(\widetilde{w}_n)}\|^2
+\widehat{\beta}\|\widetilde{w}_{z}\sqrt{\rho_\varepsilon''(\widetilde{w}_n)}\|^2\notag\\
\leq &\frac{1}{4\widehat{\beta}} \|\widehat{\alpha}_{1}-2G_{\widetilde{w}_{n-1}}\|_{\infty}^2 \|\widetilde{w}_n\sqrt{\rho_\varepsilon''(\widetilde{w}_n)}\|^2
+\widehat{\beta}\|\widetilde{w}_{z}\sqrt{\rho_\varepsilon''(\widetilde{w}_n)}\|^2\notag\\
\leq & C_n\varepsilon+\widehat{\beta}\|\widetilde{w}_{nz}\sqrt{\rho_\varepsilon''(\widetilde{w}_n)}\|^2,
\end{align}
where $C_n$ is a positive constant depending only on $\|\widehat{\alpha}_{1}-2G_{\widetilde{w}_{n-1}}\|_{\infty}$ and $ \widehat{\beta}$.

By \eqref{-15} and \eqref{-16}, we have
\begin{align*}
\frac{\text{d} }{\text{d}  t}\int_0^1\rho_\varepsilon(\widetilde{w}_n)\diff z\leq&C_n\varepsilon
+\int_0^1 \widetilde{\delta}_n\rho_\varepsilon'(\widetilde{w}_n)\diff z+\widehat{\overline{\sigma}}_n(t)\rho_\varepsilon'(\widetilde{w}_n(1,t))
-\widehat{\underline{\sigma}}_n(t)\rho_\varepsilon'(\widetilde{w}_n(0,t)),
\end{align*}
which implies that
\begin{align*}
\int_0^1\rho_\varepsilon(\widetilde{w}_n(z,t))\diff z\leq& \int_0^1\rho_\varepsilon(\widetilde{w}^0(z))\diff z +C_n\varepsilon t
+\int_0^t\int_0^1 \widetilde{\delta}_n\rho_\varepsilon'(\widetilde{w}_n)\diff z\diff t
+\int_0^t\widehat{\overline{\sigma}}_n(s)\rho_\varepsilon'(\widetilde{w}_n(1,s))\diff s
 -\int_0^t\widehat{\underline{\sigma}}_n(s)\rho_\varepsilon'(\widetilde{w}_n(0,s))\diff s.
\end{align*}
Letting $\varepsilon\rightarrow 0$, we obtain
\begin{align*}
\int_0^1 \widetilde{w}_n(z,t) \diff z\leq& \int_0^1 \widetilde{w}^0(z) \diff z
+\int_0^t\int_0^1 \widetilde{\delta}_n\text{sgn}(\widetilde{w}_n)\diff z\diff t
+\int_0^t\widehat{\overline{\sigma}}_n(s)\text{sgn}(\widetilde{w}_n(1,s))\diff s
 -\int_0^t\widehat{\underline{\sigma}}_n(s)\text{sgn}(\widetilde{w}_n(0,s))\diff s,
\end{align*}
which, along with the variable transformation $z=\frac{x-\underline{x}}{\Delta x}$, gives \eqref{-38a}. \eqref{-38b} can be obtained in the same way.

Now by \eqref{-38a} and \eqref{-38b}, for any $n\geq 1$ and any $ t\in [0,T]$, it follows that
\begin{align*}
\|\widetilde{w}_n(\cdot,t)\|_1  \leq&  \|\widetilde{w}^0\|_1 {+\displaystyle\int_0^t| \widehat{\overline{\sigma} }_n (s)|\diff s}+ {\displaystyle\int_0^t |\widehat{\underline{\sigma}}_n(s)|\diff s}+\displaystyle\int_0^t\int_{0}^{1}|\widetilde{\delta}_n(z,s) | \diff z\diff s\\
\leq&  \|\widetilde{w}^0\|_1+\widehat{\overline{\sigma}}_0T+\widehat{\underline{\sigma}}_0T+\widehat{\delta}_0T<+\infty,\\
\|\widetilde{v}_n(\cdot,t)\|_1  \leq&  \|\widetilde{v}^0\|_1+\widehat{\overline{\sigma}}_0T+\widehat{\underline{\sigma}}_0T+\widehat{\delta}_0T<+\infty.
\end{align*}
 By the continuity of $\widetilde{w}_n$, we deduce that $\widetilde{w}_n$ is uniformly bounded in $n$ on $\overline{Q}_{T}$. By \eqref{-34}, \eqref{+28} and the choice of $ \widetilde{\delta}_n, \widehat{\overline{\sigma}}_n,\widehat{\underline{\sigma}}_n$, we have that $G_{\widetilde{w}_n}(t)$ is uniformly bounded in $n$ over $\overline{Q}_{T}$. Furthermore, $C_{n,0},C_{n,1},C_{n,2},C_{n,3},C_{n,4}$ in the structural conditions (see \eqref{c-n}) are independent of $n$. We conclude that {$\widetilde{C}_{1n}$ in \eqref{+34'''}} is essentially independent of $n$. Analogously, {$\widetilde{C}_{2n}$ in \eqref{+122101}} is independent of $n$.
%

\textbf{Step 3:}  We complete the proof by taking the limit of $(\widetilde{w}_n,\widetilde{v}_{n})$. Indeed, we have in Step 2 that $(\widetilde{w}_n,\widetilde{v}_n)$ is uniformly bounded in $\mathcal {H}^{2+\theta,1+\frac{\theta}{2}}(\overline{Q}_{T})\times \mathcal {H}^{2+\theta,1+\frac{\theta}{2}}(\overline{Q}_{T})$. Due to the fact that $\mathcal {H}^{2+\theta,1+\frac{\theta}{2}}(\overline{Q}_{T})\hookrightarrow\hookrightarrow C^{2,1}(\overline{Q}_{T})$, up to a subsequence, there exists $(\widetilde{w},\widetilde{v})\in C^{2,1}(\overline{Q}_{T})\times C^{2,1}(\overline{Q}_{T})$ satisfying $(\widetilde{w}_n,\widetilde{v}_n)\rightarrow (\widetilde{w},\widetilde{v})$ in $ C^{2,1}(\overline{Q}_{T})\times C^{2,1}(\overline{Q}_{T})$ as $n\rightarrow \infty$. Noting that $\widetilde{\delta}_n\rightarrow \widetilde{\delta}$ in $L^1( Q_T)$ and $ \widehat{\underline{\sigma}}_n\rightarrow \widehat{\overline{\sigma}},\widehat{\underline{\sigma}}_n\rightarrow \widehat{\underline{\sigma}}$ in $L^1(0,T)$,  we conclude that $(\widetilde{w},\widetilde{v})$ satisfies \eqref{eq: foced FPE ON'}, \eqref{eq: foced FPE OFF'}, \eqref{B1}, \eqref{B2}, \eqref{B3}, \eqref{B4} and \eqref{I1} in the sense of distribution.

\subsection{Proof of Theorem~\ref{uniqueness}\label{Appendix: uniqueness}}

Indeed, if $(w_1,v_1),(w_2,v_2)\in C^{2,1}(\overline{\Omega}_{T})\times C^{2,1}(\overline{\Omega}_{T})$ be two distributional solutions of \eqref{eq: foced FPE}, \eqref{eq: BC 2} and \eqref{initialwv}, using the transformation of variables $z=\frac{x-\underline{x}}{\Delta x}$, it's easy to see that $(\widetilde{w}_1,\widetilde{v}_1),(\widetilde{w}_2,\widetilde{v}_2)\in C^{2,1}(\overline{Q}_{T})\times C^{2,1}(\overline{Q}_{T})$ satisfy \eqref{eq: foced FPE ON'}, \eqref{eq: foced FPE OFF'}, \eqref{B1}, \eqref{B2}, \eqref{B3}, \eqref{B4} and \eqref{I1} in the sense of distribution. Furthermore, due to the regularity of $\widetilde{w}_1$ and $\widetilde{w}_2$, $W:=\widetilde{w}_1-\widetilde{w}_2$ satisfies the following equations pointwisely:
\begin{align*}
 &W_t= \left(\widehat{ \beta}W_z- \widehat{\alpha}_{1}W-2WG_{\widetilde{w}_2}(t) - 2 \widetilde{w}_1(G_{\widetilde{w}_1}(t)-G_{\widetilde{w}_2}(t) )\right)_z, \ (z,t)\in Q_T,\\
 &\widehat{\beta}W_z(1,t)- \widehat{\alpha}_{1}(1)W(1,t)-2W(1,t)G_{\widetilde{w}_2}(t) - 2\widetilde{w}_1(1,t)(G_{\widetilde{w}_1}(t)-G_{\widetilde{w}_2}(t) )=0, \ t\in [0,T],\\
 &\widehat{\beta}W_z(0,t)- \widehat{\alpha}_{1}(0)W(0,t)-2W(0,t)G_{\widetilde{w}_2}(t) - 2\widetilde{w}_1(0,t)(G_{\widetilde{w}_1}(t)-G_{\widetilde{w}_2}(t) )=0, \ t\in [0,T],\\
 &W(z,0)=0, \ z\in (0,1),
\end{align*}
where $G_{\widetilde{w}_i}(t) (i=1,2)$ is defined as in Appendix \ref{Appendix: well-posedness proof}.
Then we have
\begin{align}
 \dfrac{1}{2}\frac{\text{d}}{\text{d} t}\|W\|^2=&-\int_{0}^{1}W_z\left( \widehat{\beta}W_z- \widehat{\alpha}_{1}W-2WG_{\widetilde{w}_2}(t) \right)\diff z
 +\int_{0}^{1}2 W_z\widetilde{w}_1(G_{\widetilde{w}_1}(t)-G_{\widetilde{w}_2}(t) ) \diff z.\label{+47}
\end{align}
It's clear that there exists a positive constant $C_0$  such that
\begin{align}
|\alpha_1+2G_{\widetilde{w}_2}(t)|\leq C_0, \forall (z,t)\in \overline{Q}_{T}.\label{+48}
\end{align}
Noting \eqref{+28}, $\int_{0}^{1}\widetilde{w}_{1} \diff z=\int_{0}^{1}\widetilde{w}_{2} \diff z$ (see Proposition~\ref{prop. 8}), and
$\widetilde{w}_1(1,t)-\widetilde{w}_1(0,t)=\widetilde{w}_2(1,t)-\widetilde{w}_2(0,t)$ due to the assumptions on $w_1,w_2$, 
  there  exists a positive constant $C_1$  such that
\begin{align}
2\left|G_{\widetilde{w}_1}(t)-G_{\widetilde{w}_2}(t)\right|&=\dfrac{\displaystyle2\int_{0}^{1}{ {\left(k_0z+\frac{k_0\widehat{b}}{\widehat{a}}+\widehat{\alpha}_1+\Delta x\dot{b}\right)}}W \diff z}{{ \displaystyle\int_{0}^{1}\widetilde{w}_{2} \diff z}}\leq C_1\|W\|_1.\label{+49}
\end{align}

By \eqref{+47}, \eqref{+48}, \eqref{+49}, H\"{o}lder's inequality and Young's inequality, we have
\begin{align*}
\dfrac{1}{2}\frac{\text{d}}{ \text{d} t}\|W\|^2\leq&-\widehat{\beta}\|W_z\|^2+\frac{\varepsilon}{2}\|W_z\|^2
+\frac{C_0^2}{2\varepsilon}\|W\|^2+\frac{\varepsilon}{2}\|W_z\|^2+\frac{1}{2\varepsilon}\|\widetilde{w}_1\|^2 \cdot (C_1 \|W\|_1)^2\\
\leq & -\left(\widehat{\beta}-\varepsilon\right)\|W_z\|^2+\frac{1}{2\varepsilon}{\left(C_0^2+C_1^2\|\widetilde{w}_1\|^2 \right)\|W\|^2},
\\
:=&{ -\left(\widehat{\beta}-\varepsilon\right)\|W_z\|^2+C_2\|W\|^2},
\end{align*}
where $\varepsilon>0$ is a positive constant.
Choosing $\varepsilon\in (0,\widehat{\beta})$ and taking integration, we have
\begin{align*}
\|W(\cdot,t)\|^2\leq \|W(\cdot,0)\|^2\cdot {\e^{2C_2 t}}=0,
\end{align*}
based on which and by the continuity of ${W}$, it yields $W\equiv 0$ on $\overline{Q}_{T}$, which gives the uniqueness of the solution to $\widetilde{w}$-system.

Since the $\widetilde{v}$-system is linear, it is clear that $\widetilde{v}_1=\widetilde{v}_2 $ over $\overline{Q}_T$.

Finally, utilizing transformation of variables again, we obtain the desired result.


\begin{thebibliography}{99}     
\bibitem{Angeli:2012}
Angeli, D., Kountouriotis, P.A.: A stochastic approach to ``dynamic-demand''
  refrigerator control. IEEE Trans. Control Syst. Technol.  \textbf{20}(3),
  581--592 (May 2012)

\bibitem{Balas:1978}
Balas, M.J.: Active control of flexible dynamic systems. Journal of
  Optimization Theory and Applications  \textbf{25}(3),  415--436 (1978)

\bibitem{Bashash2013}
Bashash, S., Fathy, H.K.: Modeling and control of aggregate air conditioning
  loads for robust renewable power management. IEEE Trans. Control Syst.
  Technol.  \textbf{21}(4),  1318--1327 (July 2013)

\bibitem{Callaway:2009}
Callaway, D.S.: Tapping the energy storage potential in electric loads to
  deliver load following and regulation, with application to wind energy.
  Energy Conv. Manag.  \textbf{50}(5),  1389--1400 (2009)

\bibitem{Christofides1996}
Christofides, P.D., Daoutidis, P.: Feedback control of hyperbolic {PDE}
  systems. AIChE Journal  \textbf{42}(11),  3063--3086 (1996)

\bibitem{Christofides2001}
Christofides, P.D.: Nonlinear and Robust Control of PDE Systems: Methods and
  Applications to Transport-Reaction Processes. Birkhuser, Boston (2001)

\bibitem{deng2015survey}
Deng, R., Yang, Z., Chow, M.Y., Chen, J.: A survey on demand response in smart
  grids: {Mathematical} models and approaches. IEEE Trans. Ind. Informat.
  \textbf{11}(3),  570--582 (Mar 2015)

\bibitem{Ghaffari:2015}
Ghaffari, A., Moura, S., Krsti\'{c}, M.: Modeling, control, and stability
  analysis of heterogeneous thermostatically controlled load populations using
  partial differential equations. J. Dyn. Syst. Meas. Control  \textbf{137},
  101009 (2015)

\bibitem{Ghanavati:2018}
Ghanavati, M., Chakravarthy, A.: Demand-side energy management by use of a
  design-then-approximate controller for aggregated thermostatic loads. IEEE
  Trans. Control Syst. Technol.  \textbf{26}(4),  1439--1448 (July 2018)

\bibitem{Hadamard:1923}
Hadamard, J.: Lectures on {C}auchy's Problem in Linear Partial Differential
  Equations. Yale University Press, New Haven, CI, 3rd edn. (1923), reprinted,
  Dover Publications, New York, 1953

\bibitem{Hao2015Aggregated}
Hao, H., Sanandaji, B.M., Poolla, K., Vincent, T.L.: Aggregate flexibility of
  thermostatically controlled loads. IEEE Trans. Power Syst.  \textbf{20}(1),
  189--198 (Jan 2015)

\bibitem{Ihara1981}
Ihara, S., Schweppe, F.C.: Physically based modelling of cold load pickup. IEEE
  Trans. Power Appart. Syst.  \textbf{PAS-100}(9),  4142--4150 (Sept 1981)

\bibitem{Isidori:1995}
Isidori, A.: Nonlinear Control Systems. Springer-Verlage, London, 3rd edn.
  (1995)

\bibitem{Khalil:2002}
Khalil, H.K.: Nonlinear Systems. Prentice-Hall, Englewood Cliffs, NJ, 3rd edn.
  (2002)

\bibitem{Koch2011Modeling}
Koch, S., Mathieu, J.L., Callaway, D.S.: Modeling and control of aggregated
  heterogeneous thermostatically controlled loads for ancillary services. In:
  Proc. Power Syst. Comput. Conf. pp.~1--8. Stockholm, Sweden (Aug 2011)

\bibitem{Krstic:1995}
Krstic, M., Kanellakopoulos, I., Kokotovic, P.: Nonlinear and Adaptive Control
  Design. John Wiley \& Sons, Inc., New York (1995)

\bibitem{Ladyzenskaja:1968}
Ladyzenskaja, O.A., Solonnikov, V.A., Uralceva, N.N.: Linear and Quasi-linear
  Equations of Parabolic Type. American Mathematical Society, Providence, RI
  (1968)

\bibitem{laparra2019}
Laparra, G., Li, M., Zhu, G., Savaria, Y.: Desynchronized model predictive
  control for large populations of fans in server racks of datacenters. IEEE
  Trans. Smart Grid  \textbf{1}(1),  411--419 (Jan 2020)

\bibitem{Levine:2009}
L\'{e}vine, J.: Analysis and Control of Nonlinear Systems: A Flatness-based
  Approach. Springer-Verlag, Berlin (2009)

\bibitem{Liu:2016}
Liu, M., Shi, Y., Liu, X.: Distributed {MPC} of aggregated heterogeneous
  thermostatically controlled loads in smart grid. IEEE Trans. Ind. Electron.
  \textbf{63}(2),  1120--1129 (Feb 2016)

\bibitem{Liu2016MPC}
Liu, M., Shi, Y.: Model predictive control of aggregated heterogeneous
  second-order thermostatically controlled loads for ancillary services. IEEE
  Trans. Power Syst.  \textbf{31}(3),  1963--1971 (May 2016)

\bibitem{Mahdavi2017}
Mahdavi, N., Braslavsky, J.H., Seron, M.M., West, S.R.: Model predictive
  control of distributed air-conditioning loads to compensate fluctuations in
  solar power. IEEE Trans. Smart Grid  \textbf{8}(6),  3055--3065 (Nov 2017)

\bibitem{Maidi:2014}
Maidi, A., Corriou, J.P.: Distributed control of nonlinear diffusion systems by
  input-output linearization. Int. J. Robust Nonlinear Control  \textbf{24}(3),
   389--405 (2014)

\bibitem{Malhame:1985}
Malham\'{e}, R., Chong, C.Y.: Electric load model synthesis by diffusion
  approximation of a high-order hybrid-state stochastic system. IEEE Trans.
  Autom. Control  \textbf{30}(9),  854--860 (Sept 1985)

\bibitem{Moura:2014}
Moura, S., Bendtsen, J., Ruiz, V.: Parameter identification of aggregated
  thermostatically controlled loads for smart grids using {PDE} techniques.
  Int. J. Control  \textbf{87}(7),  1373--1386 (2014)

\bibitem{moura2013modeling}
Moura, S., Ruiz, V., Bendtsen, J.: Modeling heterogeneous populations of
  thermostatically controlled loads using diffusion-advection {PDEs}. In: ASME
  Dynamic Systems and Control Conference. p. V002T23A001. Palo Alto,
  California, USA (Oct 2013)

\bibitem{Palensky2011}
Palensky, P., Dietrich, D.: Demand side management: Demand response,
  intelligent energy systems, and smart loads. IEEE Trans. Ind. Informat.
  \textbf{7}(3),  381--388 (Aug 2011)

\bibitem{Radaideh2019}
Radaideh, A., Vaidya, U., Ajjarapu, V.: Sequential set-point control for
  heterogeneous thermostatically controlled loads through an extended {Markov}
  chain abstraction. IEEE Trans. Smart Grid  \textbf{10}(4),  4095--4106 (Jul
  2019)

\bibitem{Soudjani2015}
Soudjani, S.E.Z., Abate, A.: Aggregation and control of populations of
  thermostatically controlled loads by formal abstractions. IEEE Trans. Control
  Syst. Technol.  \textbf{23}(3),  975--990 (May 2015)

\bibitem{tindemans2015decentralized}
Tindemans, S.H., Trovato, V., Strbac, G.: Decentralized control of thermostatic
  loads for flexible demand response. IEEE Trans. Control Syst. Technol.
  \textbf{23}(5),  1685--1700 (Sept 2015)

\bibitem{Totu:2017}
Totu, L.C., Wisniewski, R., Leth, J.: Demand response of a {TCL} population
  using switching-rate actuation. IEEE Trans. Control Syst. Technol.
  \textbf{25}(5),  1537--1551 (Sept 2017)

\bibitem{Zhang2013Aggregated}
Zhang, W., Lian, J., Chang, C.Y., Kalsi, K.: Aggregated modeling and control of
  air conditioning loads for demand response. IEEE Trans. Power Syst.
  \textbf{28}(4),  4655--4664 (Nov 2013)

\bibitem{Zhao:2018}
Zhao, L., Zhang, W.: A unified stochastic hybrid system approach to aggregate
  modeling of responsive loads. IEEE Trans. Autom. Control  \textbf{63}(12),
  4250--4263 (Dec 2018)

\bibitem{ZLZL:2019}
Zheng, J., Laparra, G., Zhu, G., Li, M.: Aggregate power control of
  heterogeneous {TCL} populations governed by {Fokker--Planck} equations. IEEE
  Trans. Control Syst. Technol.  \textbf{28}(5),  1915--1927 (Sept 2020)

\end{thebibliography}

\end{document}